\newtheorem{theorem}{Theorem}[section]
\newtheorem{lemma}[theorem]{Lemma}
\newtheorem{proposition}[theorem]{Proposition}
\newtheorem{corollary}[theorem]{Corollary}
\theoremstyle{definition}
\newtheorem{definition}[theorem]{Definition}
\newtheorem{remark}[theorem]{Remark}
\newtheorem{notation}[theorem]{Notation}
\newcommand{\II}{\mathbb I}
\newcommand{\NN}{\mathbb N}
\newcommand{\PP}{\mathbb P}
\renewcommand{\a}{\mathfrak a}
\renewcommand{\c}{\mathfrak c}
\renewcommand{\d}{\mathfrak d}
\renewcommand{\o}{\mathfrak o}
\renewcommand{\it}{\mathfrak{it}}
\newcommand{\ip}{\mathfrak{ip}}
\newcommand{\g}{\mathfrak{g}}
\newcommand{\il}{\mathfrak{il}}
\newcommand{\im}{\mathfrak{im}}
\newcommand{\imp}{\mathfrak{imp}}
\newcommand{\ilm}{\mathfrak{ilm}}
\newcommand{\ogem}{\mathfrak{ogem}}
\newcommand{\A}{\mathcal A}
\newcommand{\C}{\mathcal C}
\newcommand{\D}{\mathcal D}
\newcommand{\U}{\mathcal U}
  \newcommand{\G}{\mathcal G}
  \newcommand{\CG}{\mathcal{CG}}
  \newcommand{\T}{\mathcal T}
  \newcommand{\IT}{\mathcal{IT}}
  \renewcommand{\O}{\mathcal O}
  \newcommand{\CO}{\mathcal CO}
  \renewcommand{\L}{\mathcal L}
  \newcommand{\IL}{\mathcal{IL}}
  \newcommand{\IP}{\mathcal{IP}}
  \newcommand{\LM}{\mathcal {LM}}
  \newcommand{\ILM}{\mathcal {ILM}}
  \newcommand{\CN}{\mathcal {CN}}
  \newcommand{\CM}{\mathcal {CM}}
  \newcommand{\CCM}{\mathcal {CCM}}
  \newcommand{\IM}{\mathcal {IM}}
  \newcommand{\GEM}{\mathcal {GEM}}
  \newcommand{\OGEM}{\mathcal {OGEM}}
  \newcommand{\COGEM}{\mathcal {COGEM}}
  \renewcommand{\S}{\mathcal {S}}
  \newcommand{\DIM}{d}
  \newcommand{\DDIM}{D}
\newcommand{\SET}{\mathrm{\sc SET}}
\newcommand{\SEQ}{\mbox{\sc SEQ}}
\newcommand{\DAG}{\mbox{\sc DAG}}
\author{Thierry Monteil\footnote{
    LIPN, CNRS (UMR 7030), Universit\'e Paris 13, F-93430 Villetaneuse, France.\newline
    Email: \texttt{thierry.monteil@lipn.univ-paris13.fr}}
    \and
    Khaydar Nurligareev\footnote{
    LIPN, CNRS (UMR 7030), Universit\'e Paris 13, F-93430 Villetaneuse, France.\newline
    LIB, Université de Bourgogne, F-21078, Dijon, France.\newline
    Email: \texttt{khaydar.nurligareev@u-bourgogne.fr}}
  }
\date{}
\title{Asymptotic probability for connectedness}
\begin{document}

\maketitle

\begin{abstract}
  We study the structure of the asymptotic expansion of the probability that a
  combinatorial object is connected.
  We show that the coefficients appearing in those asymptotics are integers and
  can be interpreted as the counting sequences of other derivative
  combinatorial classes.
  The general result applies to rapidly growing combinatorial structures, which we
  call gargantuan, that also admit a sequence decomposition.
  The result is then applied to several models of graphs, of surfaces
  (square-tiled surfaces, combinatorial maps), and to geometric models of
  higher dimension (constellations, graph encoded manifolds).
  The corresponding derivative combinatorial classes are irreducible
  (multi)tournaments, indecomposable (multi)permutations and indecomposable
  perfect (multi)matchings.
\end{abstract}

\section{Introduction}

Many combinatorial structures
with a topological flavour
admit a decomposition into connected components.
We are interested in the probability that an object of such a class is
connected, when its size goes to infinity.
More precisely, for a given size $n\geq 0$, we endow the subset of objects of
size $n$ in the class with the uniform probability, and then study the
behavior of the probability that an object is connected, when $n$ goes to
infinity.

For example, the probability that a labeled graph with $n$ vertices is
connected tends to $1$ as $n$ goes to infinity~\cite{Gilbert1959}.
The probability that a permutation of size $n$ consists of a single cycle is
equal to $1/n$, which goes to $0$ as $n$ goes to infinity (the expression of a
permutation into the commutative product of cycles with disjoint support can be
understood as a decomposition of the underlying graph into its connected
components).
In between, the probability that a labeled forest of size $n$ is connected goes
to $1/\sqrt{e}$ as $n$ goes to infinity~\cite{Renyi1959}.

The question of how to distinguish between these three cases was raised by
Wright~\cite{Wright1967}.
\newline

The main tool to deal with that question is the use of generating functions. To
the counting sequence $(\a_n)$ of a labeled combinatorial class $\A$, one can
associate the \emph{generating function}
\begin{equation}\label{formula: generating function}
  A(z) = \sum\limits_{n\geq 0}\a_n\frac{z^n}{n!}.
\end{equation}

The fact that any element of $\A$ can be uniquely decomposed into the disjoint
union of its connected components translates into the following
\emph{exponential formula} (see e.g. \cite{Stanley1999}, chapter 5): 
\begin{equation}\label{formula: exp}
  A(z) = \exp\left(C(z)\right),
\end{equation}
where $C(z) = \sum\limits_{n=0}^{\infty}\c_n\dfrac{z^n}{n!}$ is the generating
function associated with the class $\C$ of elements of $\A$ that are connected.
Equivalently,
\begin{equation}\label{formula: log}
  C(z) = \log\left(A(z)\right),
\end{equation}
so that the number $\c_n$ of connected objects of size $n$ only depends on the
numbers $\a_k$ of all objects of sizes $k\leq n$. Hence, case-by-case
topological considerations can be avoided in estimating the probability
$\c_n/\a_n$.
\newline

 Wright showed~\cite{Wright1967,Wright1968} the following equivalence:
 \[
  \lim\limits_{n\to\infty}(\c_n/\a_n) = 1
  \quad\Leftrightarrow\quad
  \sum\limits_{k=1}^{n-1}\binom{n}{k}\a_k\a_{n-k} = o(\a_n)
  \quad\Leftrightarrow\quad
  \sum\limits_{k=1}^{n-1}\binom{n}{k}\c_k\c_{n-k} = o(\c_n)
 \]
 Wright's approach was developed further \cite{Compton1987, Cameron1997,
 BenderCameronOdlyzkoRichmond1999, Bell2000}
 and culminated in the paper of Bell, Bender, Cameron and
 Richmond~\cite{BellBenderCameronRichmond2000} who proved the following
 characterization.
 Assuming that the limit $\rho = \lim\limits_{n\to\infty}(\c_n/\a_n)$ exists,
 its value is in the following correspondence with the radius of convergence
 $R$ of the generating function $C(z)$:
\begin{eqnarray*}
  \rho=1 & \Leftrightarrow & R=0 \\
  \rho=0 & \Leftrightarrow & R>0 \mbox{\ \ and $C(R)$ diverges} \\
  0<\rho<1 & \Leftrightarrow & R>0 \mbox{\ \ and $C(R)$ converges} \\ 
\end{eqnarray*}
Moreover, they studied the case when the limit does not exist, and described
the behavior of $\lim\sup(\c_n/\a_n)$ and $\lim\inf(\c_n/\a_n)$ depending on
the parameter $R$.
\newline

The present paper focuses on the first case: the sequences $(\a_n)$ and
$(\c_n)$ grow rapidly and the generating functions $A(z)$ and~$C(z)$ have a
radius of convergence $R=0$.
Our interest is not restricted to the limiting probability. One of our aims is to establish the full asymptotic expansion for the probability that a random object is irreducible. In other words, we wish to have an expression of the type
 \begin{equation}\label{formula: full asymptotic expansion}
  \frac{\c_n}{\a_n} =
  \rho\Big(
   1 + f_1(n) + f_2(n) + \ldots + f_r(n) + o\big(f_r(n)\big)
   \Big),
 \end{equation}
 where $f_{k+1}(n)=o\big(f_k(n)\big)$ for every $k\in\NN$, as $n\to\infty$.

If we look for example at labeled graphs, in 1959, Gilbert~\cite{Gilbert1959}
provided the first nontrivial term of the asymptotic expansion:
$$ p_n = 1 - \frac{2n}{2^{n}} + O\left(\frac{n^2}{2^{3n/2}}\right).$$
Eleven years later, Wright~\cite{Wright1970apr} provided the first three
nontrivial terms:
$$ p_n = 1 - \binom{n}{1}\frac{1}{2^{n-1}} - 2\binom{n}{3}\frac{1}{2^{3n-6}} -
24\binom{n}{4}\frac{1}{2^{4n-10}} + O\left(\frac{n^5}{2^{5n}}\right).$$
While Wright's method allows to compute more terms recursively, it does not
provide a way to grasp the structure of the whole asymptotic expansion, if only
because there is no interpretation to the coefficients $1,2,24,\dots$.

The goal of this paper is to provide the structure of the whole asymptotic
expansion.
As we shall see, when the sequence $(\a_n)$ grows fast enough (see
Definition~\ref{def: gargantuan sequence} of a gargantuan class), the
above-mentioned coefficients are integers that can be interpreted as the
counting sequence $(\d_n)$ of a derivative class $\D$ that depends on the
decomposition of the elements of $\A$ into a sequence of elements of a class
$\D$.

Our main result is:

{
\renewcommand{\thetheorem}{\ref{theorem:SET-asymptotics}}
\begin{theorem}[$\SET$ asymptotics]
  Let $\A$ be a gargantuan labeled combinatorial class with positive counting
  sequence, such that $\A = \SET(\C) = \SEQ(\D)$ for some labeled combinatorial
  classes $\C$ and~$\D$.
  Suppose that $a\in\A$ is a random object of size~$n$.
  Then
  \begin{equation*}
    \PP(a\mbox{ is }\SET\mbox{-irreducible}) \approx 1 - \sum\limits_{k\ge
    1}\d_k\cdot\binom{n}{k}\cdot \frac{\a_{n-k}}{\a_n}.
  \end{equation*}
\end{theorem}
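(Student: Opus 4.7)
I combine the two decompositions of $\A$. The relation $\A=\SET(\C)$ gives $A(z)=\exp(C(z))$, while $\A=\SEQ(\D)$ gives $A(z)=1/(1-D(z))$; together they yield $C(z)=-\log(1-D(z))$, and differentiating produces the central identity $C'(z)=A(z)\,D'(z)$. Extracting the coefficient of $z^{n-1}/(n-1)!$ from this identity and reindexing gives
\begin{equation*}
\c_n=\sum_{k=1}^{n}\binom{n-1}{k-1}\,\d_k\,\a_{n-k},
\end{equation*}
while the relation $A(z)(1-D(z))=1$ gives the $\SEQ$ recurrence
\begin{equation*}
\a_n=\sum_{k=1}^{n}\binom{n}{k}\,\d_k\,\a_{n-k}.
\end{equation*}
Subtracting these two and applying Pascal's identity $\binom{n}{k}-\binom{n-1}{k-1}=\binom{n-1}{k}$ produces the exact closed form
\begin{equation*}
\a_n-\c_n=\sum_{k=1}^{n-1}\binom{n-1}{k}\,\d_k\,\a_{n-k},
\end{equation*}
which agrees with the theorem modulo the substitution $\binom{n-1}{k}\leftrightarrow\binom{n}{k}$.

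I then read the symbol $\approx$ as a Poincar\'e asymptotic expansion: for every fixed $K$,
\begin{equation*}
1-\frac{\c_n}{\a_n}=\sum_{k=1}^{K}\d_k\binom{n}{k}\frac{\a_{n-k}}{\a_n}+o\!\left(\d_K\binom{n}{K}\frac{\a_{n-K}}{\a_n}\right)\quad\text{as }n\to\infty.
\end{equation*}
Because $\binom{n-1}{k}=\binom{n}{k}(1-k/n)$, the exact and claimed $k$-th terms agree to relative factor $1+O(1/n)$ at each fixed~$k$. The gargantuan hypothesis is essential in two ways: it makes the formal series a genuine asymptotic scale (the terms $\d_k\binom{n}{k}\a_{n-k}/\a_n$ decrease rapidly in $k$ at fixed large~$n$), and it implies $\d_n\sim\a_n$.

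\textbf{Main obstacle.} The core of the proof is to show that the defect between the exact formula and the claimed asymptotic truncation,
\begin{equation*}
\sum_{k=K+1}^{n-1}\binom{n-1}{k}\,\d_k\,\a_{n-k}\;-\;\sum_{k=1}^{K}\binom{n-1}{k-1}\,\d_k\,\a_{n-k},
\end{equation*}
is $o\bigl(\binom{n}{K}\d_K\a_{n-K}\bigr)$. Neither sum is individually small: the first is dominated by its extremal term $\binom{n-1}{n-1}\d_{n-1}\a_1=\d_{n-1}\sim\a_{n-1}$, while the second is dominated by its leading term $\binom{n-1}{0}\d_1\a_{n-1}=\d_1\,\a_{n-1}$. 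The required estimate therefore hinges on a precise cancellation between two contributions of order $\a_{n-1}$, with the residue controlled, iteratively, at the scale of the next term $\d_{K+1}\binom{n}{K+1}\a_{n-K-1}$. Making this cancellation rigorous and uniform in $n$, by extracting refined tail estimates on both $\a$ and $\d$ from the gargantuan definition, is the technically hardest step and is where the quantitative content of that hypothesis is consumed.
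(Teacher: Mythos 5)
Your exact identities are correct: $C'=AD'$ does give $\c_n=\sum_{k=1}^{n}\binom{n-1}{k-1}\d_k\a_{n-k}$, and subtracting the $\SEQ$ recurrence yields $\a_n-\c_n=\sum_{k=1}^{n-1}\binom{n-1}{k}\d_k\a_{n-k}$ (one can check $\a_5-\c_5=296$ for labeled graphs). But the proof stops exactly where the theorem begins. The passage from this exact formula in $\binom{n-1}{k}$ to the claimed expansion in $\binom{n}{k}$ is not a matter of the terms agreeing ``to relative factor $1+O(1/n)$'': for a gargantuan class the ratio of consecutive terms of the expansion decays super-polynomially (for graphs it is of order $n2^{-n}$), so the level-$k$ discrepancy $\frac{k}{n}\binom{n}{k}\d_k\a_{n-k}$ is enormously \emph{larger} than the level-$(k+1)$ term and cannot be absorbed into the error. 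Concretely, at $k=1$ your identity gives $(n-1)\d_1\a_{n-1}$ while the theorem asserts $n\d_1\a_{n-1}+O(n^2\a_{n-2})$, and $\d_1\a_{n-1}$ is not $O(n^2\a_{n-2})$. The theorem is nevertheless true because, as you correctly observe in your final paragraph, the far tail of the exact convolution ($k$ close to $n$, where $\d_{n-1}\a_1\sim\d_1\a_{n-1}$ since $\d_1=\a_1$ and $\d_m\sim\a_m$) cancels these head discrepancies order by order. That cancellation \emph{is} the theorem; you name it as the ``main obstacle'' and leave it unproved, and the auxiliary fact $\d_n\sim\a_n$ that it requires is likewise asserted without proof. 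As written, the argument establishes an exact identity that visibly differs from the asserted expansion and defers the reconciliation.

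The paper's proof sidesteps all of this by a three-line application of Bender's theorem (Theorem~\ref{theorem:Bender}): taking $F(y)=\log(1+y)$ and $U=A-1$ gives $V=C$ and $W=F'(U)=1/A=1-D$, and Bender's conclusion $v_n\approx\sum_{k}w_ku_{n-k}$ is verbatim the claimed expansion after dividing by $\a_n/n!$. Bender's theorem is precisely the black box that controls such convolutions for gargantuan $(u_n)$, using condition (ii) of Definition~\ref{def: gargantuan sequence} to bound the middle range of the convolution and the analyticity of $F$ to control the coefficients $w_k$. To complete your route you would essentially have to reprove that result for $F=\log(1+y)$, i.e.\ show that for each fixed $r$ one has $\c_n/n!-\a_n/n!+\sum_{k=1}^{r}\frac{\d_k}{k!}\cdot\frac{\a_{n-k}}{(n-k)!}=O\bigl(\a_{n-r-1}/(n-r-1)!\bigr)$. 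Either invoke Theorem~\ref{theorem:Bender} at that point or supply this estimate; without one of the two the proof is incomplete.
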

\addtocounter{theorem}{-1}
}

As an example of application, we get the following asymptotic for connected
labeled graphs:

{
\renewcommand{\thetheorem}{\ref{proposition:graph}}
\begin{corollary}
  The asymptotic probability that a random labeled simple graph $g$ with $n$~vertices is connected satisfies
  \begin{equation*}
  \PP\big(g\mbox{ is connected}\big) \approx 1 - \sum\limits_{k\ge1} \it_k \cdot \binom{n}{k} \cdot \frac{2^{k(k+1)/2}}{2^{kn}},
  \end{equation*}
  where $\it_k$ denotes the number of irreducible tournaments of size $k$.
\end{corollary}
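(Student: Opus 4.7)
The plan is to apply Theorem~\ref{theorem:SET-asymptotics} to the class $\A$ of labeled simple graphs, whose counting sequence is $\a_n = 2^{\binom{n}{2}}$. This sequence grows super-exponentially, so the gargantuan condition (Definition~\ref{def: gargantuan sequence}) will be routine to verify. The $\SET$ decomposition is tautological: any labeled simple graph is the disjoint union of its connected components, so $\A = \SET(\C)$ with $\C$ the class of connected labeled graphs, and "$\SET$-irreducible" agrees with the usual notion of connectedness.

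The essential step is to exhibit a labeled class $\D$ providing the $\SEQ$ decomposition, that is, such that $A(z) = 1/\bigl(1 - D(z)\bigr)$. The key remark is that $2^{\binom{n}{2}}$ is also the number of labeled tournaments on $[n]$, since each of the $\binom{n}{2}$ unordered pairs of vertices can be oriented in two ways. Tournaments admit a classical sequence decomposition: every tournament has a unique expression as an ordered sequence of strongly connected (\emph{irreducible}) tournaments, because the condensation of a tournament on its strongly connected components is itself a total order. Taking $\D$ to be the class of irreducible tournaments therefore gives $\d_k = \it_k$ and yields the required identity $A(z) = 1/\bigl(1 - D(z)\bigr)$ at the level of generating functions.

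Plugging this into Theorem~\ref{theorem:SET-asymptotics} gives
\begin{equation*}
  \PP\bigl(g\mbox{ is connected}\bigr) \approx 1 - \sum_{k\geq 1} \it_k \binom{n}{k} \frac{\a_{n-k}}{\a_n},
\end{equation*}
and a direct calculation of $\binom{n-k}{2} - \binom{n}{2} = \tfrac{k(k+1)}{2} - kn$ yields $\a_{n-k}/\a_n = 2^{k(k+1)/2}/2^{kn}$, which is exactly the expression in the statement. The only non-routine ingredient is the re-interpretation of $2^{\binom{n}{2}}$ as the tournament count and the resulting irreducible-tournament $\SEQ$ decomposition; once this identification is made, the rest is a substitution into the main theorem.
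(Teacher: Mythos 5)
Your proposal is correct and follows essentially the same route as the paper: the paper proves the general $d$-multigraph case (Proposition~\ref{proposition:multigraph}) via exactly this identification of $2^{\binom{n}{2}}$ with the tournament count and the $\SEQ$ decomposition into irreducible (equivalently, strongly connected) tournaments, and then obtains the corollary by setting $d=1$. The only thing you assert rather than carry out is the gargantuan check for $2^{\binom{n}{2}}/n!$, which the paper does explicitly via Lemma~\ref{lemma: sufficient conditions for gargantuan sequence}; it is indeed routine, so this is not a substantive gap.
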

\addtocounter{theorem}{-1}
}

In this example, the number of irreducible tournaments plays the role of the
derivative sequence.
Irreducible tournaments come from the decomposition of a tournament into a
sequence of irreducible ones.
While the class of tournaments and the class of graphs are not isomorphic (e.g.
when we swap the labels, the two tournaments of size $2$ are exchanged, while
the two graphs of size two are fixed), they share the same counting sequence,
hence they can be identified from an enumerative combinatorics perspective.
We could summarize the double decomposition as follows:

\begin{center}
\begin{tikzpicture}[scale=1]
  \draw[above] (0,0.35) node {class $\A$};
  \draw[gray] (0,0) node {counted by};
  \draw[below] (0,-0.3) node {$A(z)$};
  \draw[->] (-2,0) -- (-5,0);
  \draw[above, gray] (-3.5,0) node {decomposed};
  \draw[below, gray] (-3.5,0) node {as SET of};
  \draw (-6.5,0) node {class $\C$};
  \draw[->] (-6.5,-0.5) -- ++(0,-1) -- ++(3,0) -- ++(0,-1.3);
  \draw[below, gray] (-5,-1.5) node {counted by};
  \draw (-3.5,-3.5) node {$C(z) = \log(A(z))$};
  \draw[->] (2,0) -- (5,0);
  \draw[above, gray] (3.5,0) node {decomposed};
  \draw[below, gray] (3.5,0) node {as SEQ of};
  \draw (6.5,0) node {class $\D$};
  \draw[->] (6.5,-0.5) -- ++(0,-1) -- ++(-3,0) -- ++(0,-1.2);
  \draw[below, gray] (5,-1.5) node {counted by};
  \draw (3.5,-3.5) node {$D(z) = 1 - \dfrac{1}{A(z)}$};
  \draw[->] (-1.5,-3.5) -- (1.5,-3.5);
  \draw[above, red] (0,-3.5) node {derivative} ;
\end{tikzpicture}
\end{center}

We will also list several other applications that include the connectedness for
other models of graphs
(Propositions~\ref{proposition:multigraph},~\ref{proposition:oriented
graph},~\ref{proposition:digraph}),
square-tiled surfaces (Proposition~\ref{proposition:origami}),
combinatorial maps (\ref{proposition:SET-asymptotics for combinatorial map
surfaces}), as well as higher dimensional models of graph encoded manifolds
(Proposition~\ref{proposition:OGEM}) and constellations (Proposition~\ref{proposition:constellation}).

\section{Tools}

\subsection{Asymptotic expansion}

\label{subsection: Asymptotic expansion}
\begin{notation}\label{notation: approx} 
 For a sequence $(a_n)$ of numbers and an integer $m$, we write\nomenclature[W$\Z$]{$\approx$}{asymptotic expansion}
  \begin{equation}\label{formula: asymptotic expansion}
   a_n \approx \sum\limits_{k\ge m}f_k(n)
  \end{equation}
  if
  \begin{equation*}
    \forall r\ge m, \ \ a_n = \sum\limits_{k=m}^{r}f_k(n) + O\big(f_{r+1}(n)\big)
  \end{equation*}
  and
  \begin{equation*}
      \forall k\geq m, \ \ f_{k+1}(n) = o\big(f_k(n)\big).
  \end{equation*}
The expression \eqref{formula: asymptotic expansion} is called an \emph{asymptotic expansion} of the sequence $(a_n)$.
\end{notation}

\subsection{Combinatorial classes and decompositions}

We use the standard notion of \emph{labeled combinatorial structure (or
class)}, as used in the textbooks \cite{Stanley1999, FlajoletSedgewick2009,
Bona2015}.
Briefly, and to fix the notations, a combinatorial structure $\A$ is a
collection of objects of finite size such that for each integer $n\geq 0$, the
number $\a_n$ of objects of size $n$ is finite.
The class $\A$ is \emph{labeled} if the objects of size $n$ are defined on a
set of labels of cardinality $n$, whose elements are distinguishable (hence,
the elements of $\A$ have only trivial automorphisms). It is common to fix the
ground set to $[n]=\{1,\dots,n\}$, but we should keep in mind that the nature
of the ground set is irrelevant. The notion of combinatorial
species~\cite{Joyal1981, BergeronLabelleLeroux1998} makes this point precise.
In this paper, every combinatorial structure is assumed labeled.
To such a structure $\A$, we associate its generating function
$$A(z) = \sum\limits_{n\geq 0}\a_n\frac{z^n}{n!}.$$

Combinatorial classes can be combined to create new classes: for example, we
can construct the disjoint union or the labeled product of finitely many
combinatorial classes. The corresponding generating functions are the sum and
the product of the original ones, respectively.
A combinatorial class can be combined with itself in various ways.
We can form the \emph{sequence} of a single class $\A$ by considering the
labeled products of $\A$ by itself, involving arbitrary many terms.
We denote $\SEQ(\A)$ the corresponding class. Its generating function, denoted
by $\SEQ(\A)(z)$, satisfies:
$$\SEQ(\A)(z)=\sum\limits_{k \geq 0}A(z)^k=\frac{1}{1-A(z)}.$$

The \emph{set} of a single class $\A$ is obtained from the sequence $\SEQ(\A)$
by forgetting the order of the components.
We denote $\SET(\A)$ the corresponding class. Its generating function, denoted
by $\SET(\A)(z)$, satisfies:
$$\SET(\A)(z)=\sum\limits_{k \geq 0}\frac{A(z)^k}{k!}=\exp(A(z)).$$

The $\SET$ and $\SEQ$ constructions are only possible when $A(0)=\a_0=0$,
in which case we have $\SET(\A)(0)=1$.

To a combinatorial class $\A$ and an integer $n$ such that $\a_n>0$, we endow
the nonempty finite set $\A_n$ of objects of size $n$ with the uniform
probability $\PP_n$: each object of size $n$ has probability~$1/\a_n$.
If $Q$ is some property about objects of $\A$, we denote by
$$\PP(a \mbox{ satisfies }Q)$$
the sequence of probabilities
$$\big(\PP_n\{a \in \A_n\mid a \mbox{ satisfies }Q\}\big)_{\a_n>0}.$$
\newline

When the objects of a class $\A$ have a topological flavour so that
connectedness makes sense, we can consider the subclass $\C$ of connected
objects from $\A$. In such a case, when the connected components of objects
from $\A$ belong to $\C$ and when the knowledge of the connected components of
an object from $\A$ is sufficient to reconstruct it, we have $\A=\SET(\C)$.
In this case, we have $C(z)=\log(A(z))$, so that the probability 
$$\PP_n(a \mbox{ is connected}) = \c_n/\a_n$$
only depends on the counting sequence $(\a_n)$.
Hence, we will identify labeled combinatorial classes if they share the same
counting sequence
(such classes are called ``combinatorially isomorphic'' in
\cite{FlajoletSedgewick2009}). This identification is adapted to our objective
and offers a lot of flexibility.

Note that the fact that objects of a combinatorial structure admit a
topological model is not sufficient to state that it is the $\SET$ of its
connected objects.
For example, chord diagrams~\cite{Stein1978} and meandric
systems~\cite{LandoZvonkin1992} have a topological representation, but the
knowledge of their connected components is not enough to reconstruct the
original objects, since connected objects can be intertwined in several ways.

\subsection{Gargantuan classes and Bender theorem}
\label{section: Bender}

\begin{definition}\label{def: gargantuan sequence}
A sequence $(a_n)$ is \emph{gargantuan} if, for any integer $r$, as $n\to\infty$, the following two conditions hold:
 \[
  \mbox{ (i) }\quad
  \frac{a_{n-1}}{a_n} \to 0;
  \qquad\qquad
  \mbox{ (ii) }\quad
  \sum\limits_{k=r}^{n-r}|a_ka_{n-k}| = O\big(a_{n-r}\big).
 \]

 A labeled combinatorial class $\A$ is \emph{gargantuan} if its counting
  sequence $(\a_n)$ is such that $(\a_n/n!)$ is gargantuan.
\end{definition}

\begin{theorem}[Bender~\cite{Bender1975}, in the simplified form of~\cite{Odlyzko1995}]
\label{theorem:Bender}
 Consider the formal power series
 \[
  U(z) = \sum\limits_{n=1}^{\infty}u_nz^n
 \]
 and a function $F(x)$ analytic in a neighborhood of the origin.
 Define
 \[
  V(z) = \sum\limits_{n=0}^{\infty}v_nz^n = F\big(U(z)\big)
 \qquad
 \mbox{and}
 \qquad
  W(z) = \sum\limits_{n=0}^{\infty}w_n z^n =
  \left[\frac{\partial}{\partial x} F(x)\right]_{x=U(z)}.
 \]
 Assume that $u_n\ne 0$ for any $n>0$ and the sequence $(u_n)$ is gargantuan.
 Then
 \[
  v_n \approx \sum\limits_{k\ge0}w_ku_{n-k}
 \]
 and the sequence $(v_n)$ is gargantuan.
\end{theorem}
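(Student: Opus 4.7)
The plan is to Taylor-expand $F$ around $0$, derive an inductive asymptotic expansion for each convolution power of $U$, and then sum it against the coefficients of $F$. By analyticity of $F$ at $0$, write $F(x) = \sum_{j \geq 0} f_j x^j$ with $|f_j| \leq M \rho^{-j}$ for some $M, \rho > 0$. Setting $u_n^{(j)} := [z^n] U(z)^j$, we have the exact identities
\[
v_n = \sum_{j \geq 1} f_j u_n^{(j)} \quad (n \geq 1), \qquad w_k = \sum_{j \geq 0} (j+1) f_{j+1} u_k^{(j)}.
\]

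The key step is an inductive asymptotic expansion for $u_n^{(j)}$. I would prove by induction on $j \geq 1$ that, for every $r \geq 0$,
\[
u_n^{(j)} = j \sum_{k=0}^{r} u_k^{(j-1)} u_{n-k} + O\bigl(u_{n-r-1}\bigr)
\]
as $n \to \infty$. The base $j = 1$ is immediate. For the step, start from the exact Cauchy product $u_n^{(j)} = \sum_{k=0}^{n-1} u_k^{(j-1)} u_{n-k}$ and split the sum into small indices ($k \leq r$), large indices ($k \geq n - r - 1$), and the middle block. The small block directly yields one copy of $\sum_{k=0}^{r} u_k^{(j-1)} u_{n-k}$; the large block, after substituting $m = n - k$ and applying the inductive hypothesis to $u_{n-m}^{(j-1)}$, contributes the remaining $j-1$ copies; the middle block is controlled by an iterated use of gargantuan property (ii). The combinatorial source of the factor $j$ is the symmetry of the multinomial expansion $u_n^{(j)} = \sum_{k_1 + \cdots + k_j = n} u_{k_1} \cdots u_{k_j}$: for $n$ large, the dominant configurations have exactly one ``big'' $k_i$ and $j-1$ ``small'' $k_\ell$, and there are $j$ choices for which index is big.

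Multiplying this expansion by $f_j$ and summing formally gives
\[
v_n = \sum_j f_j u_n^{(j)} \approx \sum_j f_j \cdot j \sum_k u_k^{(j-1)} u_{n-k} = \sum_k \Bigl( \sum_j j f_j u_k^{(j-1)} \Bigr) u_{n-k} = \sum_k w_k u_{n-k},
\]
which is the desired expansion provided the interchange of summation is justified by the geometric decay of $|f_j|$ combined with at most sub-exponential growth in $j$ of the implicit constants from the core lemma. The second requirement of Notation~\ref{notation: approx}, namely $w_{k+1} u_{n-k-1} = o(w_k u_{n-k})$, follows directly from gargantuan property (i) for $u$, after passing to the first index $k_0$ with $w_{k_0} \neq 0$ in case some $w_k$ vanish. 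To verify that $(v_n)$ is itself gargantuan, use the leading asymptotics $v_n \sim w_{k_0} u_{n-k_0}$: condition (i) for $v$ then follows at once from (i) for $u$, and condition (ii) from (ii) for $u$ with a bounded multiplicative factor.

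The main obstacle is the uniformity in $j$ needed in the core lemma: the middle-block remainder $\sum_{r \leq k \leq n-r-1} u_k^{(j-1)} u_{n-k} = O(u_{n-r-1})$ must hold with an implicit constant that grows no faster than some fixed power $\rho^{j}$, so that after multiplication by $|f_j| \leq M \rho^{-j}$ and summation over $j$ the aggregated error remains $O(u_{n-r-1})$. Establishing this quantitative form of property (ii) under iterated convolution --- and in particular tracking how its constants depend on $j$ --- is the technical heart of the argument. Once this uniformity is in place, the rest of the proof is systematic bookkeeping.
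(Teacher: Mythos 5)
A preliminary remark on the comparison itself: the paper contains no proof of this statement. Theorem~\ref{theorem:Bender} is imported as a black box from Bender (1975), in the simplified form given in Odlyzko's survey, so your attempt can only be measured against the published argument. Your outline does share its general strategy (expand $F$ in powers and control the coefficients of the convolution powers $U(z)^j$), and the algebra is correct as far as it goes: the identities $v_n=\sum_{j\ge1}f_j u_n^{(j)}$ and $w_k=\sum_{j\ge1}jf_j u_k^{(j-1)}$ are exact, and both are \emph{finite} sums for each fixed $n$ or $k$, since $U$ has valuation $1$ and hence $u_n^{(j)}=0$ for $j>n$; your inductive expansion $u_n^{(j)}=j\sum_{k=0}^{r}u_k^{(j-1)}u_{n-k}+O(u_{n-r-1})$ is also true for each \emph{fixed} $j$, with the factor $j$ correctly accounting for which coordinate carries the single large part.

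Nevertheless there is a genuine gap, which you have located honestly but not closed: the uniformity in $j$. Because $j$ runs up to $n$ in the exact identity, you need non-asymptotic bounds valid for all pairs $(j,n)$, with constants $C_r(j)$ satisfying $\sum_j |f_j|\,C_r(j)<\infty$. The ``iterated use of gargantuan property (ii)'' you invoke for the middle block is not available off the shelf: condition (ii) of Definition~\ref{def: gargantuan sequence} concerns $(u_n)$ itself, not its convolution powers, and proving a quantitative closure of the gargantuan property under convolution, with tracked constants, \emph{is} the substance of Bender's theorem --- deferring it leaves a scheme rather than a proof. Your stated tolerance is moreover miscalibrated: with $C_r(j)=O(\rho^j)$ against $|f_j|\le M\rho^{-j}$, the aggregated error is only $O(n\,u_{n-r-1})$, and the gargantuan hypothesis alone does \emph{not} yield $n\,u_{n-r-1}=O(u_{n-r-1})$, nor even $O(u_{n-r})$ --- note that the paper's Lemma~\ref{lemma: sufficient conditions for gargantuan sequence} lists $na_{n-1}=O(a_n)$ as a strictly stronger sufficient condition (i)$'$, precisely because Definition~\ref{def: gargantuan sequence} does not imply it. You would need $C_r(j)=O(B^j)$ with $B$ strictly inside the disk of analyticity of $F$; since $B$ comes from $(u_n)$ while $\rho$ comes from $F$, and the product $|f_j|\,C_r(j)$ is invariant under the rescaling $U\mapsto\lambda U$, $F(x)\mapsto F(x/\lambda)$, no normalization trick helps: one must genuinely prove subgeometric growth of the convolution constants, together with uniformity in $j$ of the ``for $n$ large enough'' thresholds in your induction (for $j$ comparable to $n$, the inductive hypothesis gets applied at small indices, outside the asymptotic regime). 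Two secondary points: when some interior $w_k$ vanish --- which happens in the paper's own applications, e.g.\ $\it_2=0$ for tournaments --- the clause $w_{k+1}u_{n-k-1}=o(w_k u_{n-k})$ fails literally, and passing to the first nonzero $w_{k_0}$ repairs only the leading term; the robust formulation of the conclusion is Bender's, namely $v_n=\sum_{k=0}^{r}w_k u_{n-k}+O(u_{n-r-1})$ for every $r$, and the same caveat affects your verification that $(v_n)$ is gargantuan, which otherwise is routine.
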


In order to verify the gargantuan property in some applications, we will rely
on the following sufficient condition.

\begin{lemma}\label{lemma: sufficient conditions for gargantuan sequence}
 If a sequence $(a_n)$ satisfies the following two conditions
 \begin{align*}\label{equation: sufficient conditions for gargantuan sequence}
  \emph{ (i)' }\quad &
  na_{n-1} = O(a_n),\,\,\mbox{ as }\, n\to\infty; \\
  \emph{ (ii)' }\quad &
  x_k = |a_ka_{n-k}| \mbox{ is decreasing for } k < n/2 \mbox{ and for all but finitely many } n,
 \end{align*}
 then $(a_n)$ is gargantuan.
\end{lemma}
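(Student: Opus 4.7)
The argument splits according to the two defining properties of a gargantuan sequence. The first property, $a_{n-1}/a_n \to 0$, is immediate from (i)': the relation $n a_{n-1} = O(a_n)$ yields $a_{n-1}/a_n = O(1/n)$, which tends to zero.

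For the second property, fix an integer $r$ and set $x_k := |a_k a_{n-k}|$. The plan is to exploit the symmetry $x_k = x_{n-k}$ together with the monotonicity provided by (ii)' in order to collapse the sum $S_n = \sum_{k=r}^{n-r} x_k$ to essentially its two extreme terms. For all sufficiently large $n$, (ii)' gives $x_r \geq x_{r+1} \geq \cdots \geq x_{\lfloor n/2 \rfloor}$, so by symmetry
\[
S_n \le 2 x_r + 2 \lceil n/2 \rceil \cdot x_{r+1}.
\]

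It remains to bound each contribution by $O(a_{n-r})$. The boundary term is trivial: $x_r = |a_r| \cdot |a_{n-r}| = O(a_{n-r})$, since $r$ is fixed and $|a_r|$ is a constant. The crux is estimating $x_{r+1}$. Applying (i)' with $n$ replaced by $n-r$ gives $(n-r) \cdot |a_{n-r-1}| = O(a_{n-r})$, hence $|a_{n-r-1}| = O(a_{n-r}/n)$. Since $|a_{r+1}|$ is again a constant, we conclude $x_{r+1} = O(a_{n-r}/n)$, and therefore $n \cdot x_{r+1} = O(a_{n-r})$. Combining the two estimates yields $S_n = O(a_{n-r})$, which is the required condition (ii).

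I do not foresee any real obstacle; the only points of care are that the implicit constants in the $O$-estimates depend on $r$ (which is harmless, as $r$ is held fixed throughout), and that the finitely many exceptional values of $n$ where (ii)' may fail can be absorbed into these constants without changing the asymptotic conclusion.
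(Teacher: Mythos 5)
Your proposal is correct and follows essentially the same route as the paper's proof: deduce condition (i) directly from (i)', then use the symmetry $x_k = x_{n-k}$ and the monotonicity from (ii)' to bound the sum by $2x_r$ plus $O(n)\cdot x_{r+1}$, and finally control $x_{r+1}$ by applying (i)' at index $n-r$. The only differences are cosmetic (your count $2\lceil n/2\rceil$ versus the paper's $n-2r-1$), and you correctly read ``decreasing'' where the paper's proof contains a typo (``increasing'').
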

\begin{proof}
 The first condition of Definition~\ref{def: gargantuan sequence} immediately follows from \mbox{(i)'}.
 To check the second condition, note that
 the value of $x_k$ is symmetric with respect to $k = n/2$ and it is increasing for $k < n/2$.
 Hence, conditions (i)' and (ii)' imply that
 \[
  \sum\limits_{k=r}^{n-r} |a_ka_{n-k}| =
  \sum\limits_{k=r}^{n-r} x_k =
  2x_r + \sum\limits_{k=r+1}^{n-r-1} x_k \le
  2x_r + (n-2r-1)x_{r+1} = O(a_{n-r}).
 \]
\end{proof}

\begin{lemma}\label{lemma: a_nb_n is gargantuan}
 If $(a_n)$ and $(b_n)$ are two non-negative gargantuan sequences, then the sequence $(a_nb_n)$ is non-negative gargantuan as well.
\end{lemma}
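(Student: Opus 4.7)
The plan is to verify the two defining conditions of a gargantuan sequence (Definition~\ref{def: gargantuan sequence}) for $c_n := a_n b_n$, exploiting the non-negativity hypothesis to dominate each cross-term in the convolution by the corresponding convolutions of $(a_n)$ and $(b_n)$ separately.

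First I would handle condition (i), which is immediate: writing
\[
\frac{c_{n-1}}{c_n} \;=\; \frac{a_{n-1}}{a_n}\cdot\frac{b_{n-1}}{b_n},
\]
and both factors tend to $0$ by the assumption that $(a_n)$ and $(b_n)$ are gargantuan.

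The substantive step is condition (ii). Fix an integer $r$. Since $(a_n)$ is gargantuan and non-negative, we have
\[
  \sum_{k=r}^{n-r} a_k a_{n-k} \;=\; O(a_{n-r}).
\]
Because every summand is non-negative, each individual term is bounded by the whole sum, so there is a constant $C_r$ (depending on $r$ but not on $n$) such that
\[
  a_k a_{n-k} \;\le\; C_r\, a_{n-r} \qquad \text{for all } r\le k\le n-r.
\]
Applying this uniform bound and then the analogous condition (ii) for $(b_n)$, I get
\[
  \sum_{k=r}^{n-r} c_k c_{n-k} \;=\; \sum_{k=r}^{n-r} (a_k a_{n-k})(b_k b_{n-k}) \;\le\; C_r\, a_{n-r} \sum_{k=r}^{n-r} b_k b_{n-k} \;=\; O(a_{n-r}\,b_{n-r}) \;=\; O(c_{n-r}),
\]
which is exactly condition (ii) for $(c_n)$. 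Non-negativity of $(c_n)$ is evident.

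There is no real obstacle here; the only point to be careful about is that the hidden constants in the big-$O$ notation are allowed to depend on $r$, which is fine because the gargantuan definition requires the bound for each fixed $r$ as $n\to\infty$. The non-negativity assumption is essential: it is what lets me bound each individual term of $\sum a_k a_{n-k}$ by the total sum, thereby separating the $(a_n)$-part from the $(b_n)$-part in the convolution.
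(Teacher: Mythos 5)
Your proof is correct and follows essentially the same route as the paper: both arguments use non-negativity to decouple the convolution of $(a_nb_n)$ into the separate convolutions of $(a_n)$ and $(b_n)$ (the paper bounds the sum of products by the product of sums, while you uniformly bound each term $a_ka_{n-k}$ by $C_r\,a_{n-r}$ and factor it out --- a trivially equivalent step). The treatment of condition (i) and of non-negativity is likewise the same.
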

\begin{proof}
 Let us check the conditions of Definition~\ref{def: gargantuan sequence}. The first condition trivially holds. The second condition reads
 \[
  \sum\limits_{k=r}^{n-r}(a_kb_k)(a_{n-k}b_{n-r}) \le
  \left(\sum\limits_{k=r}^{n-r}a_ka_{n-k}\right) 
  \left(\sum\limits_{k=r}^{n-r}b_kb_{n-k}\right) =
  O(a_{n-r})O(b_{n-r}) = O(a_{n-r}b_{n-r}).
 \]
 The condition $a_nb_n \ge 0$ is trivial.
\end{proof}

\section{Double SET/SEQ decomposition}
The goal of this section is to see how the existence of a $\SEQ$-decomposition
is a natural hypothesis for the computation of an asymptotic expansion for the
probability that an object is $\SET$-irreducible.
To this end, we will try to estimate this probability by a direct computation,
under the hypothesis that the counting sequence of the studied class grows
rapidly.
\newline

Let us start with two labeled combinatorial classes $\A$ and $\C$, such that
$\A = \SET(\C)$.
Each object $a\in\A$ of size $n$ consists of several connected components that
are considered as objects from $\C$.
The sizes of these components determine a partition $\lambda \vdash n$, which
can be expressed as an $n$-tuple $p$ of non-negative integers counting the
number of components for each size: $p\in P_n = \{(p_1,\ldots,p_n) \mid
p_1+2p_2+\ldots+np_n=n\}$.
The exponential formula~\eqref{formula: exp} can be expanded to obtain the following
expression for $\a_n$:
\begin{equation}\label{formula: a_n in terms of c_n}
 \a_n = \sum\limits_{p\in P_n}
  \frac{n!}{(1!)^{p_1}\ldots(n!)^{p_n}} \cdot
   \frac{1}{p_1!\ldots p_n!} \cdot
    \big(\c_1^{p_1}\ldots\c_n^{p_n}\big).
\end{equation}

If the sequence $(\a_n)$ grows fast enough, then the radius of convergence of
$A(z)$ is zero, hence $\a_n \sim \c_n$~\cite{BellBenderCameronRichmond2000}.
Let us look at the asymptotic expansion of the difference $(\a_n-\c_n)$.
For gargantuan sequences, this difference is decomposed into a sum of
$\c_{n-k}$, $k=1,2,3,\ldots$, taken with certain coefficients that can be
deduced from~\eqref{formula: a_n in terms of c_n}.
For instance, the first three terms of the asymptotics are
\[
 \a_n - \c_n
  = 
 \binom{n}{1} \c_1 \c_{n-1}
  +
 \binom{n}{2} (\c_2+\c_1^2) \c_{n-2}
  +
 \binom{n}{3} (\c_3+3\c_2\c_1+\c_1^3) \c_{n-3}
  +
 \ldots
\]
or, even simpler,
\[
 \a_n - \c_n
  = 
 \binom{n}{1} \a_1 \c_{n-1}
  +
 \binom{n}{2} \a_2 \c_{n-2}
  +
 \binom{n}{3} \a_3 \c_{n-3}
  +
 O(n^4\c_{n-4}).
\]
Thus, a finite number of terms of the asymptotic expansion are determined by
objects possessing a large connected component.

In practice, we suppose that $(\a_n)$ is known, while $(\c_n)$ is not.
That is the reason why we would like to have an asymptotic expression in terms of $\a_{n-k}$ rather than $\c_{n-k}$.
In order to get one, let us apply the inclusion-exclusion principle.

The main idea is to count objects with respect to large components, replacing $\c_{n-k}$ by $\a_{n-k}$ that is asymptotically the same.
For instance, if we are interested only in the leading term of $(\a_n-\c_n)$, as $n\to\infty$, then it is sufficient to replace $\c_{n-1}$ by $\a_{n-1}$:
\[
 \a_n - \c_n
  = 
 \binom{n}{1} \c_1 \a_{n-1}
  +
 O(n^2\a_{n-2}).
\]
Combinatorially, the term $\binom{n}{1} \c_1 \a_{n-1}$ counts objects of size
$n$ with at least one isolated vertex.
Any object with $k$ isolated vertices is counted $k$ times, since we mark one
of them.
However, the contribution of that object is significantly smaller, and hence,
can be ignored.

When the second term is needed, we must accurately count objects possessing
connected components of size at least $(n-2)$.
Therefore, we need to deduce objects with connected components of sizes
$(n-2)$, $1$ and $1$, which are counted twice in the first approximation: 
\[
 \a_n - \c_n
  = 
 \binom{n}{1} \c_1 \a_{n-1}
  +
 \binom{n}{2} (\c_2 - \c_1^2) \a_{n-2}
  +
 O(n^3\a_{n-3}).
\]
Again, combinatorially, objects with several components of size $1$ or $2$ are
counted several times within these two terms, but their contribution is
$O(n^3\a_{n-3})$.

The same approach works in searching for more terms of the asymptotic expansion.
Indeed, given a positive integer $n$, for any partition $\lambda=(\lambda_1,\ldots,\lambda_{l(\lambda)})$ of an integer $k\le n$,
define $B_{\lambda,n}$ to be the number of objects with marked connected components of sizes $\lambda_1,\lambda_2,\ldots,\lambda_{l(\lambda)}$.
For example, if $n=3$, $k=2$ and $\lambda=(1,1)$, then $B_{(1,1),3}$ is three times the number of objects of size $3$ with $3$ isolated vertices: $B_{(1,1),3}=3\c_1^3$.
In the general case, $B_{\lambda,n}$ satisfies
\[
 B_{\lambda,n}
  = 
 \binom{n}{k}
 \frac{k!}{(1!)^{p_1} \ldots (k!)^{p_k}}
 \frac{\c_1^{p_1} \ldots \c_k^{p_k}}
 {p_1! \ldots p_k!}
 \a_{n-k}
\]
and, for a given number $r$ of terms of the asymptotic expansion, the inclusion-exclusion principle provides
\begin{equation}\label{formula: a_n-c_n in terms of B}
 \a_n - \c_n
  = 
 \sum\limits_{\lambda \colon |\lambda| < r}
  (-1)^{l(\lambda)-1} B_{\lambda,n}
  +
 O(n^{r}\a_{n-r}).
\end{equation}

Let us factor out $\binom{n}{k}\a_{n-k}$ in the relation~\eqref{formula:
a_n-c_n in terms of B}.
The result is of the form
$$ \a_n - \c_n \approx \sum\limits_{k \ge 1} \binom{n}{k} \d_k \a_{n-k},$$
where
\begin{equation}\label{formula: definition of d_k}
  \d_k = \sum\limits_{P_k} (-1)^{(p_1+\ldots+p_k)-1}\cdot
  \frac{k!}{(1!)^{p_1}\ldots(k!)^{p_k}}\cdot
  \frac{\c_1^{p_1}\ldots\c_k^{p_k}}{p_1!\ldots p_k!}.
\end{equation}
It turns out that these coefficients might have a combinatorial meaning on
their own.
In order to make this fact clear, let us apply the inclusion-exclusion
principle, so that we obtain an exact formula for $\a_n$:
\[
 \a_n
  = 
 \sum\limits_{\lambda \colon |\lambda| \le n}
  (-1)^{l(\lambda)-1} B_{\lambda,n}
\]
or
\[
 \a_n = \sum\limits_{k=1}^{n}
 \d_k\binom{n}{k}\a_{n-k}.
\]
If $D(z)$ denotes the power series $D(z) = \sum\limits_{n\ge 1}\d_n z^n/n!$,
the last equation can be rewritten as
\[
 A(z) = 1 + A(z) D(z).
\]
Equivalently,
\[
 A(z) = \frac{1}{1 - D(z)},
\]
so that if we could find a labeled combinatorial class $\D$ satisfying
$\A=\SEQ(\D)$, then $(\d_n)$~would be the counting sequence of $\D$.
\newline

This leads us to the following definition:
\begin{definition}\label{def:SET/SEQ}
  A combinatorial class $\A$ \emph{admits a double $\SET/\SEQ$ decomposition}
  if there exist two classes $\C$ and $\D$ such that $\A=\SET(\C)=\SEQ(\D)$.
  The class $\D$ is called the \emph{derivative class} of $\A$ and the counting
  sequence $(\d_n)$ is called the \emph{derivative sequence} of the sequence
  $(\a_n)$.
\end{definition}

\section{Main results}

\begin{theorem}[$\SET$ asymptotics]\label{theorem:SET-asymptotics}
  Let $\A$ be a gargantuan labeled combinatorial class with positive counting
  sequence, such that $\A = \SET(\C) = \SEQ(\D)$ for some labeled combinatorial
  classes $\C$ and~$\D$.
  Suppose that $a\in\A$ is a random object of size~$n$.
  Then
  \begin{equation}\label{formula: SET-asymptotics}
    \PP(a\mbox{ is }\SET\mbox{-irreducible}) \approx 1 - \sum\limits_{k\ge
    1}\d_k\cdot\binom{n}{k}\cdot \frac{\a_{n-k}}{\a_n}.
  \end{equation}
\end{theorem}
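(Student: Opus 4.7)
The plan is to invoke Bender's theorem (Theorem~\ref{theorem:Bender}) with $F(x) = \log(1+x)$, using the $\SET$-decomposition to identify the output $V$ as the connected generating function $C$, and using the $\SEQ$-decomposition to express the derivative $W$ in terms of $(\d_n)$. Because $a\in\A$ is $\SET$-irreducible precisely when it consists of a single component from $\C$, the probability on the left-hand side of \eqref{formula: SET-asymptotics} is nothing but $\c_n/\a_n$, so it suffices to expand this ratio.

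Concretely, I would set $U(z) = A(z) - 1 = \sum_{n\ge 1}(\a_n/n!)\,z^n$. By hypothesis $(\a_n/n!)$ is gargantuan and positive, so $u_n \ne 0$ for every $n\ge 1$ and Bender's hypotheses are met. Since $F(x) = \log(1+x)$ is analytic at the origin, one has $V(z) = F(U(z)) = \log(A(z)) = C(z)$, hence $v_n = \c_n/n!$, and $W(z) = F'(U(z)) = 1/A(z)$. This is where the second decomposition enters: from $\A = \SEQ(\D)$ we have $A(z) = 1/(1-D(z))$, so
\[
 W(z) = 1 - D(z) = 1 - \sum_{k\ge 1}\frac{\d_k}{k!}\,z^k,
\]
identifying $w_0 = 1$ and $w_k = -\d_k/k!$ for $k\ge 1$. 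Theorem~\ref{theorem:Bender} then asserts
\[
 \frac{\c_n}{n!} \approx \frac{\a_n}{n!} - \sum_{k\ge 1}\frac{\d_k}{k!}\cdot\frac{\a_{n-k}}{(n-k)!},
\]
and multiplying through by $n!/\a_n$, together with the identity $n!/(k!(n-k)!) = \binom{n}{k}$, yields the claimed expansion for $\c_n/\a_n$.

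The main obstacle is a bookkeeping verification: one must check that the relation $\approx$ survives the rescaling by $n!/\a_n$. Multiplying both sides by a common positive factor preserves the big-$O$ truncation clause of Notation~\ref{notation: approx} automatically; the nontrivial point is that for successive non-vanishing $\d_k$ the rescaled terms $\d_k\binom{n}{k}\,\a_{n-k}/\a_n$ remain strictly decreasing in $k$, which reduces to $(n-k)\,\a_{n-k-1}/\a_{n-k}\to 0$ as $n\to\infty$, exactly gargantuan condition~(i) applied to $(\a_n/n!)$ at index $n-k$. Indices $k$ for which $\d_k = 0$ are simply skipped in the expansion.
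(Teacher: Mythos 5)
Your proposal is correct and follows essentially the same route as the paper: both apply Bender's theorem to $U(z)=A(z)-1$ with $F(x)=\log(1+x)$, identify $V=C$ via the $\SET$-decomposition and $W=1/A=1-D$ via the $\SEQ$-decomposition, and then rescale by $n!/\a_n$. Your closing remark on why the $\approx$ relation survives the rescaling is a small extra verification the paper leaves implicit, but it changes nothing about the argument.
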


\begin{proof}
  Let us consider the labeled combinatorial class $\U=\A-\{\epsilon\}$ with
  generating function $U(z)$, and the function
  $$F(y) = \log(1+y).$$
  Since $\U$ is gargantuan and $F$ is analytic at the origin, we can apply
  Theorem~\ref{theorem:Bender}.
  \newline
  We have
  $$V(z)=\log(A(z))=C(z)$$
  and
  $$W(z)=\left[\frac{\partial}{\partial x} F(x)\right]_{x=A(z)-1} =
  \frac{1}{A(z)} = 1 - D(z).$$
  Hence,
  $$v_n \approx \sum\limits_{k\geq0}w_k u_{n-k}.$$
  By substituting with the original counting sequences, we have
  $$\frac{\c_n}{n!} \approx \frac{\a_n}{n!} - \sum\limits_{k\geq 1}\frac{\d_k}{k!} \frac{\a_{n-k}}{(n-k)!}.$$
  Dividing by $\frac{\a_n}{n!}$, we obtain
  $$\frac{\c_n}{\a_n} \approx 1 - \sum\limits_{k\geq 1}\binom{n}{k}\d_k \frac{\a_{n-k}}{\a_n}.$$
\end{proof}

\begin{remark}
  Theorem \ref{theorem:SET-asymptotics} provides, for each $k$, a convergence
  result when $n$ goes to infinity. If we fix $n$ in the right-hand side of
  \eqref{formula: SET-asymptotics}, the sum will be $0$.
\end{remark}

\begin{remark}
  In the proof of Theorem~\ref{theorem:SET-asymptotics}, the relationship
  between the $\SET$ and the $\SEQ$ decompositions is witnessed by the fact
  that
  $$\frac{\partial\log(z)}{\partial z} = \frac{1}{z}.$$
  This fact also supports the name ``derivative sequence''.
\end{remark}

\subsection{Asymptotics for p-periodic sequences}\label{section: SET-asymptotics d-gargantuan}

 Counting sequences of certain combinatorial classes possess zeroes.
 For example, perfect matchings only exist when the number of vertices is even.
 Similarly, some combinatorial structures admit elements only
 when the number of vertices is a multiple of~$p$ for some integer $p>1$.

 \begin{definition}
 Let $p\geq 1$ be an integer. A sequence $(\a_n)$ is $p$-\emph{periodic} if:
 \[
  \a_n \neq 0
  \qquad\Leftrightarrow\qquad
  \exists k\in\NN, \ \ n = pk.
 \]
 \end{definition}

 This situation is quite general for a combinatorial class $\A$ that admits a
 $\SET$ decomposition $\A = \SET(\C)$: if its counting sequence $(\a_n)$
 vanishes for infinitely many $n$,
 then the sequence $(\c_n)$ is $p$-periodic for some $p>1$~\cite{Wright1967,
 Wright1968}.
 In particular, both sequences $(\a_{pn})$ and $(\c_{pn})$ are eventually
 positive.
 
 Let us adapt Theorem~\ref{theorem:SET-asymptotics} for labeled combinatorial
 classes whose counting sequences are $p$-periodic.
 This will be useful in Sections~\ref{section:combinatorial-map}
 and~\ref{section:GEM}.

\begin{proposition}\label{theorem:SET-asymptotics, d-gargantuan}
  Let $\A$ be a labeled combinatorial class, such that $\A = \SET(\C) =
  \SEQ(\D)$ for some labeled combinatorial classes $\C$ and~$\D$.
  Suppose that the counting sequence $(\a_n)$ is $p$-periodic for some $p\geq
  1$, and that the sequence $(\a_{pn}/(pn)!)$ is gargantuan.
  Suppose also that $a\in\A$ is a random object of size~$pn$.
  Then
  \begin{equation}\label{formula: SET-asymptotics, d-gargantuan}
    \PP(a\mbox{ is }\SET\mbox{-irreducible}) \approx
  1 - \sum\limits_{k\ge1}\d_{pk}\cdot \binom{pn}{pk} \cdot \frac{\a_{p(n-k)}}{\a_{pn}}.
 \end{equation}
\end{proposition}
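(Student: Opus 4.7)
The plan is to reduce to Theorem~\ref{theorem:SET-asymptotics} by passing to a rescaled generating function in which the $p$-periodic vanishing has been factored out. First I would observe that since $(\a_n)$ is $p$-periodic, $A(z)$ is actually a power series in the variable $w = z^p$: write $A(z) = \tilde A(z^p)$ with $\tilde A(w) = \sum_{n \geq 0} \frac{\a_{pn}}{(pn)!}\, w^n$. Since $A(0) = 1$, both $\log A(z)$ and $1/A(z)$ are well-defined formal power series in $A(z) - 1$, hence also power series in $z^p$. Consequently $C(z) = \tilde C(z^p)$ and $D(z) = \tilde D(z^p)$, so that the sequences $(\c_n)$ and $(\d_n)$ are automatically $p$-periodic as well.

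Next I would apply Bender's theorem directly to the reduced series. Set $\tilde U(w) = \tilde A(w) - 1$ and $F(y) = \log(1 + y)$. The coefficients of $\tilde U$ are exactly $u_n = \a_{pn}/(pn)!$ for $n \geq 1$, which form a gargantuan sequence by hypothesis, so Theorem~\ref{theorem:Bender} applies. The associated series are $\tilde V(w) = F(\tilde U(w)) = \log \tilde A(w) = \tilde C(w)$ and $\tilde W(w) = 1/\tilde A(w) = 1 - \tilde D(w)$, whose coefficients are $\c_{pn}/(pn)!$ and, for $k \geq 1$, $-\d_{pk}/(pk)!$ respectively. Bender's conclusion then reads
\[
  \frac{\c_{pn}}{(pn)!} \approx \frac{\a_{pn}}{(pn)!} - \sum_{k \geq 1} \frac{\d_{pk}}{(pk)!} \cdot \frac{\a_{p(n-k)}}{(p(n-k))!}.
\]

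Finally, dividing both sides by $\a_{pn}/(pn)!$ and rewriting the factorial factors via $\binom{pn}{pk} = (pn)!/\bigl((pk)!\,(p(n-k))!\bigr)$ produces exactly the claimed expansion~\eqref{formula: SET-asymptotics, d-gargantuan}. The only genuinely new point compared with the proof of Theorem~\ref{theorem:SET-asymptotics} is the opening observation that $(\d_n)$ is itself $p$-periodic; this is not part of the hypotheses but is forced by the algebraic identity $D(z) = 1 - 1/A(z)$ together with $A(0) = 1$. Once this observation is in place, the argument is a verbatim repetition of the proof of Theorem~\ref{theorem:SET-asymptotics} with $\tilde A$, $\tilde C$, $\tilde D$ playing the roles of $A$, $C$, $D$, and the main obstacle is simply the bookkeeping to convert Bender's factorial-indexed conclusion back into the binomial $\binom{pn}{pk}$.
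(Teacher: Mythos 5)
Your proof is correct and follows essentially the same route as the paper, which likewise applies Bender's theorem to the reduced series $U(z)=A(z^{1/p})-1$ with $F(y)=\log(1+y)$ and then repeats the argument of Theorem~\ref{theorem:SET-asymptotics}; your write-up merely makes explicit the bookkeeping the paper leaves implicit. (One cosmetic point: what you need and prove is only that $\c_n$ and $\d_n$ vanish when $p\nmid n$, which is weaker than $p$-periodicity in the sense of the paper's definition, since the latter would also require nonvanishing at multiples of $p$.)
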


\begin{proof}
  Conditions of the proposition ensure that we can apply Theorem~\ref{theorem:Bender} to the formal power series $U(z) = A(z^{1/p}) - 1$ and the function
  $F(y) = \log(1+y)$.
  Repeating the reasoning of the proof of Theorem~\ref{theorem:SET-asymptotics} leads to~\eqref{formula: SET-asymptotics, d-gargantuan}.
\end{proof}

\section{Applications}

\subsection{Graphs}

Graphs are versatile objects, the asymptotic probability for the connectedness
of several variants can be approached with our methods.

\begin{definition}\label{def: multigraph} For $d\geq 1$, a (labeled)
  \emph{$d$-multigraph} of size $n$ is a graph defined on the set of vertices
  $[n]$, such that any pair of distinct vertices are joined by at most $d$
  indistinguishable edges.
  A (labeled) \emph{$d$-multitournament} of size $n$ is a directed graph
  defined on the set of vertices $[n]$, such that any ordered pair of distinct
  vertices $i\ne j$ is joined by $d$ directed edges (which consist of $l$
  indistinguishable directed edges from $i$ to $j$ and $d-l$ indistinguishable
  directed edges from $j$ to $i$ for some $0\leq l\leq d$ that varies from pair to pair).
  A \emph{simple graph} is a $1$-multigraph, a \emph{tournament} is a
  $1$-multitournament.
  A $d$-multitournament is \emph{reducible} if
  there exists a partition of its vertices into two nonempty parts $A$ and $B$
  such that any pair of vertices $(a,b) \in A \times B$ are joined by $d$
  oriented edges that all go from~$a$ to~$b$.
  A $d$-multitournament is \emph{irreducible} if it is not reducible.
  We denote by $\G(d), \CG(d), \T(d),$ and $\IT(d)$ the combinatorial classes of
  $d$-multigraphs, connected $d$-multigraphs, $d$-multitournaments, and
  irreducible $d$-multitournaments, respectively.
\end{definition}

\begin{proposition}\label{proposition:multigraph}
  The asymptotic probability that a random labeled $d$-multigraph $g$ with
  $n$~vertices is connected satisfies
  \begin{equation}\label{formula: SET-asymptotics for multigraphs}
    \PP\big(g\mbox{ is connected}\big) \approx 1 - \sum\limits_{k\ge1} \it_k(d)
    \cdot \binom{n}{k} \cdot \frac{(d+1)^{k(k+1)/2}}{(d+1)^{kn}},
  \end{equation}
  where $\it_k(d)$ denotes the number of irreducible $d$-multitournaments of
  size $k$.
\end{proposition}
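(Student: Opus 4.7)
The plan is to apply Theorem~\ref{theorem:SET-asymptotics} with $\A = \G(d)$. Three hypotheses need to be checked: a $\SET$ decomposition, a $\SEQ$ decomposition, and the gargantuan property.

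The $\SET$ decomposition $\G(d) = \SET(\CG(d))$ is immediate, since any $d$-multigraph is the disjoint union of its connected components. For the $\SEQ$ decomposition, I would first observe that a $d$-multigraph of size~$n$ is specified by the choice, for each of the $\binom{n}{2}$ unordered pairs of vertices, of an integer in $\{0,\ldots,d\}$ (the edge multiplicity), so $|\G(d)_n| = (d+1)^{\binom{n}{2}}$; the same count holds for $d$-multitournaments, where each unordered pair $\{i,j\}$ carries $d+1$ choices for how many of the $d$ directed edges go from $i$ to $j$. Hence $\G(d)$ and $\T(d)$ share a counting sequence, and are combinatorially isomorphic in the sense explained in Section~2. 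I would then extend the classical condensation theorem from tournaments to $d$-multitournaments: every such object admits a unique ordered partition $V = V_1\sqcup\cdots\sqcup V_m$ of its vertex set such that, for every $i<j$ and every $(a,b)\in V_i\times V_j$, all $d$ directed edges point from $a$ to $b$, and each $V_i$ induces an irreducible sub-multitournament. This yields $\T(d) = \SEQ(\IT(d))$, and via the identification with $\G(d)$ the derivative sequence is $(\it_k(d))$.

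For the gargantuan property of $a_n := \a_n/n! = (d+1)^{\binom{n}{2}}/n!$, I would invoke Lemma~\ref{lemma: sufficient conditions for gargantuan sequence}. A direct computation gives $a_n/(n\,a_{n-1}) = (d+1)^{n-1}/n^2 \to \infty$, establishing condition~(i)'. For condition~(ii)', the ratio $x_k/x_{k+1} = (k+1)(d+1)^{n-2k-1}/(n-k)$ exceeds $1$ on the range $k<n/2$ for all but finitely many $n$, so $x_k$ is decreasing there. Theorem~\ref{theorem:SET-asymptotics} then applies, and the identity $\binom{n}{2}-\binom{n-k}{2} = kn - k(k+1)/2$ transforms $\a_{n-k}/\a_n$ into $(d+1)^{k(k+1)/2}/(d+1)^{kn}$, yielding exactly~\eqref{formula: SET-asymptotics for multigraphs}. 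The main step requiring care is the condensation argument for $d$-multitournaments, which generalizes the classical case $d=1$ but must be set up so that the decomposition remains canonical when edges come with multiplicities.
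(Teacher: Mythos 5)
Your proposal is correct and follows essentially the same route as the paper: the identification $\G(d)=\T(d)$ via the common counting sequence $(d+1)^{\binom{n}{2}}$, the condensation of a $d$-multitournament into a linearly ordered sequence of irreducible components to get $\T(d)=\SEQ(\IT(d))$, the verification of the gargantuan property through Lemma~\ref{lemma: sufficient conditions for gargantuan sequence}, and the final application of Theorem~\ref{theorem:SET-asymptotics} with the exponent identity $\binom{n}{2}-\binom{n-k}{2}=kn-k(k+1)/2$. The ratio computations for conditions (i)' and (ii)' match the paper's up to taking reciprocals, so no further comparison is needed.
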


\begin{proof}
  First, we have $\G(d) = \T(d)$ since both classes have $(d+1)^{\binom{n}{2}}$
  objects of size $n$ for any $n$.
  The decomposition $\G(d) = \SET(\CG(d))$ follows from the decomposition of a
  graph into its connected components.
  To prove that $\T(d) = \SEQ(\IT(d))$, let us consider a $d$-multitournament
  $t$. We decompose $t$ into its irreducible components (note that the
  irreducible components of $t$ are also its strongly connected
  components~\cite{Rado1943, Roy1958}). If $A$ and $B$ are distinct components,
  we write $A<B$ if for every pair of vertices $a\in A$ and $b\in B$, every
  edge between $a$ and $b$ is oriented from $a$ to $b$. We check that $<$ is a
  linear order on the set of irreducible components of $t$, so that we can
  enumerate them as $t_1<t_2<\dots<t_l$ and construct the sequence of
  irreducible $d$-multitournaments $(t_1, ..., t_l)$ out of $t$.
  Conversely, if $(t_1, ..., t_l)$ is a (labeled) sequence of irreducible
  $d$-multitournaments, we can construct a $d$-multitournament $t$ as the
  disjoint union of the $t_i$ and then adding $d$ oriented edges from any
  vertex of $t_i$ to any vertex of $t_j$ if $i<j$.
  \newline
  \newline
 Let us show that the sequence
 \[
   a_n = \frac{(d+1)^{\binom{n}{2}}}{n!}
 \]
 is gargantuan by applying Lemma~\ref{lemma: sufficient conditions for gargantuan sequence}. Condition (i)' follows from
 \[
  \frac{a_{n-1}}{a_n} =
  \frac{(d+1)^{\binom{n-1}{2}}}{(n-1)!} \cdot \frac{n!}{(d+1)^{\binom{n}{2}}} = \frac{n}{(d+1)^n} \to 0.
 \]
Regarding condition (ii)', let us check that the sequence $x_k = a_k a_{n-k}$
  is decreasing for $k < n/2$.
  To this end, let us consider
 \[
  \frac{x_{k+1}}{x_k} =
  \frac{a_{k+1}a_{n-k-1}}{a_ka_{n-k}} =
  \frac{(n-k)}{(k+1)} \cdot \frac{(d+1)^{k+1}}{(d+1)^{n-k}}.
 \]
 Since the function
 \[
   f(x) = \frac{(d+1)^x}{x}
 \]
 is increasing for large $x$, we have the following equivalences for $n$ large enough:
 \[
  \frac{x_{k+1}}{x_k} \le 1
  \quad\Leftrightarrow\quad
  \frac{(d+1)^{k+1}}{(k+1)} \le \frac{(d+1)^{n-k}}{(n-k)}
  \quad\Leftrightarrow\quad
  {k+1} \le {n-k}.
 \]
 Hence, $(x_k)$ is decreasing for $k < n/2$.
 According to Lemma~\ref{lemma: sufficient conditions for gargantuan sequence}, $(a_n)$ is gargantuan.
 \newline
 \newline
Therefore, we can apply Theorem~\ref{theorem:SET-asymptotics}:
\begin{eqnarray*}
  \PP\big(g\mbox{ is }\SET\mbox{-irreducible}\big) & \approx &
  1 - \sum\limits_{k\ge1} \it_k(d) \cdot \binom{n}{k} \cdot \frac{\g_{n-k}(d)}{\g_n(d)} \\
  & \approx &
  1 - \sum\limits_{k\ge1} \it_k(d) \cdot \binom{n}{k} \cdot \frac{(d+1)^{k(k+1)/2}}{(d+1)^{kn}}. \\
\end{eqnarray*}
\end{proof}

\begin{corollary}\label{proposition:graph}
  The asymptotic probability that a random labeled simple graph $g$ with $n$~vertices is connected satisfies
  \begin{equation}\label{formula: SET-asymptotics for graphs}
  \PP\big(g\mbox{ is connected}\big) \approx 1 - \sum\limits_{k\ge1} \it_k \cdot \binom{n}{k} \cdot \frac{2^{k(k+1)/2}}{2^{kn}},
  \end{equation}
  where $\it_k$ denotes the number of irreducible tournaments of size $k$.
\end{corollary}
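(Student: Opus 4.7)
The plan is to observe that the corollary is precisely the specialization $d=1$ of Proposition~\ref{proposition:multigraph}, so no new analytic or combinatorial work is required, only three cosmetic identifications. First, I would unfold Definition~\ref{def: multigraph} at $d=1$: a $1$-multigraph is by definition a simple graph and a $1$-multitournament is by definition a tournament, so $\G(1)$ coincides with the class of simple graphs and $\IT(1)$ coincides with the class of irreducible tournaments. In particular $\it_k(1) = \it_k$, the number of irreducible tournaments of size~$k$. Second, setting $d=1$ in the factor $(d+1)^{k(k+1)/2}/(d+1)^{kn}$ of \eqref{formula: SET-asymptotics for multigraphs} produces the factor $2^{k(k+1)/2}/2^{kn}$ appearing in \eqref{formula: SET-asymptotics for graphs}.

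With these identifications in place, the conclusion follows by reading off \eqref{formula: SET-asymptotics for multigraphs} at $d=1$. The double $\SET/\SEQ$ decomposition $\G(1) = \SET(\CG(1)) = \SEQ(\IT(1))$ and the gargantuan property of the sequence $2^{\binom{n}{2}}/n!$ are already established inside the proof of Proposition~\ref{proposition:multigraph}, so nothing further needs to be checked. There is no genuine obstacle here; the only point worth flagging is the combinatorial identification $\G(1)=\T(1)$ as counting sequences (both equal $2^{\binom{n}{2}}$ at size~$n$), which is what legitimizes transporting the $\SEQ$-decomposition from tournaments back to simple graphs before applying Theorem~\ref{theorem:SET-asymptotics}. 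Alternatively, one could repeat the argument of Proposition~\ref{proposition:multigraph} directly for $\A = \G(1)$ with the base~$2$ specialized throughout, but this would only duplicate the same calculation.
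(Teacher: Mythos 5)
Your proposal is correct and matches the paper's own proof, which is simply ``Apply Proposition~\ref{proposition:multigraph} with $d=1$.'' The extra identifications you spell out ($\it_k(1)=\it_k$, the base $d+1=2$, and the counting-sequence identification $\G(1)=\T(1)$) are all already contained in the statement and proof of Proposition~\ref{proposition:multigraph}, so nothing further is needed.
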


\begin{proof}
Apply Proposition~\ref{proposition:multigraph} with $d=1$.
\end{proof}

\begin{remark}
The counting sequence of irreducible tournaments (\href{https://oeis.org/A054946}{A054946} in the OEIS~\cite{OEIS2022}) is
 \[
  (\it_k) = 1,\, 0,\, 2,\, 24,\, 544,\, 22\,320,\, 1\,677\,488,\, 236\,522\,496,\, 64\,026\,088\,576,\, \ldots
 \]
 Hence, Corollary~\ref{proposition:graph} is consistent with the result of Wright~\cite{Wright1970apr}:
 \[
  \PP\big(\mbox{g is connected}\big) =
   1 -
    \binom{n}{1}\cdot2^{1-n} -
    2\cdot\binom{n}{3}\cdot2^{6-3n} -
    24\cdot\binom{n}{4}\cdot2^{10-4n} +
    O\big(n^5\cdot2^{-5n}\big).
 \]
 The fact that there are no irreducible tournaments of size $2$ is witnessed
  by the fact that the second nontrivial term (in $2^{-2n}$) is hidden.
\end{remark}

\begin{remark}
  Theorem~\ref{theorem:SET-asymptotics} can be applied to several other models
  of graphs.
  For example \cite[Statement 10.1.18]{Nurligareev2022} deals with multigraphs
  with distinguished edges (which is essentially
  Proposition~\ref{proposition:multigraph} with $d=2^k-1$).
  We can also apply it to estimate the probability that a directed graph is
  weakly connected.
  Let us recall that a \emph{digraph} of size $n$ is a directed graph defined
    on the set of vertices $[n]$, such that any ordered pair of distinct
    vertices $i\ne j$ is joined by at most one directed edge in each direction.
    An \emph{oriented graph} is a digraph with at most one oriented edge
    between two distinct vertices.
    We have:

\begin{corollary}\label{proposition:oriented graph}
 The asymptotic probability that a random labeled oriented graph $g$ with $n$~vertices is weakly connected satisfies
 \begin{equation}\label{formula: SET-asymptotics for oriented graphs}
   \PP\big(g\mbox{ is weakly connected}\big) \approx 1 - \sum\limits_{k\ge1} \it_k(2) \cdot \binom{n}{k} \cdot \frac{3^{k(k+1)/2}}{3^{kn}},
 \end{equation}
  where $\it_k(2)$ denotes the number of irreducible $2$-multitournaments of size $k$.
\end{corollary}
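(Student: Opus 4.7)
The plan is to reduce the statement to Proposition~\ref{proposition:multigraph} with $d=2$ by exhibiting a combinatorial isomorphism, in the enumerative sense adopted in the paper, between oriented graphs and $2$-multitournaments. An oriented graph on $[n]$ is determined by assigning to each of the $\binom{n}{2}$ unordered pairs of vertices one of three possibilities (no edge, one arrow, or the reverse arrow), while a $2$-multitournament on $[n]$ is determined by assigning to each pair one of three configurations (two arrows in one direction, two in the other, or one in each). Both classes therefore have $3^{\binom{n}{2}}$ objects of size $n$, and since the paper identifies labeled classes with equal counting sequences, they may be treated as a single class for the purposes of asymptotics.

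The next step is to establish the two decompositions. On the oriented graph side, a weakly connected oriented graph is one whose underlying simple graph is connected, and the decomposition of an oriented graph into its weakly connected components gives the identity (oriented graphs) $= \SET$(weakly connected oriented graphs). On the $2$-multitournament side, the argument from the proof of Proposition~\ref{proposition:multigraph} applies verbatim for $d=2$: the irreducible components admit a canonical linear order, defined by $A<B$ whenever every pair $(a,b)\in A\times B$ is joined by two arrows oriented from $a$ to $b$, yielding a decomposition as $\SEQ$ of irreducible $2$-multitournaments, whose derivative sequence is $(\it_k(2))$.

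The sequence $(3^{\binom{n}{2}}/n!)$ was already shown to be gargantuan in the proof of Proposition~\ref{proposition:multigraph} (case $d=2$), so all hypotheses of Theorem~\ref{theorem:SET-asymptotics} are satisfied. A direct computation of the ratio
\[
 \frac{\a_{n-k}}{\a_n} = \frac{3^{\binom{n-k}{2}}}{3^{\binom{n}{2}}} = \frac{3^{k(k+1)/2}}{3^{kn}}
\]
then yields the announced expansion. The subtle point worth flagging is that the natural bijection between oriented graphs and $2$-multitournaments involves an arbitrary choice per pair of vertices, so weak connectedness on one side does not correspond to irreducibility on the other in any canonical structural way; what saves the argument is that Theorem~\ref{theorem:SET-asymptotics} consumes only the counting sequence $(\a_n)$ and the derivative sequence $(\d_n)$, both of which are the same on either side of the identification.
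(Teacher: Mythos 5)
Your proof is correct and follows essentially the same route as the paper: the paper's own proof is a one-liner that identifies oriented graphs with $2$-multigraphs (both having $3^{\binom{n}{2}}$ objects of size $n$) and invokes Proposition~\ref{proposition:multigraph} with $d=2$. You merely spell out the details (the $\SET$/$\SEQ$ decompositions, the gargantuan check, and the observation that only the counting sequences matter) that the paper delegates to the proof of Proposition~\ref{proposition:multigraph}.
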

\begin{proof}
  Apply Proposition~\ref{proposition:multigraph} with $d=2$.
  The class of oriented graphs can be identified with the class of
  $2$-multigraphs, since in both cases, there are $3$ ways to link two distinct
  vertices.
\end{proof}

\begin{corollary}\label{proposition:digraph}
 The asymptotic probability that a random digraph $g$ with $n$~vertices is weakly connected satisfies
 \begin{equation}\label{formula: SET-asymptotics for digraphs}
   \PP\big(g\mbox{ is weakly connected}\big) \approx 1 - \sum\limits_{k\ge1} \it_k(3) \cdot \binom{n}{k} \cdot \frac{4^{k(k+1)/2}}{4^{kn}},
 \end{equation}
  where $\it_k(3)$ denotes the number of irreducible $3$-multitournaments of size $k$.
\end{corollary}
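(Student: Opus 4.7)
The plan is to reduce Corollary \ref{proposition:digraph} to Proposition \ref{proposition:multigraph} by identifying the class of digraphs with the class of $3$-multigraphs at the level of counting sequences, and then applying the proposition with parameter $d=3$.

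First I would count, for any ordered pair of distinct vertices $i\ne j$ of a digraph, the number of possible configurations of directed edges between them: there can be no edge, an edge from $i$ to $j$ only, an edge from $j$ to $i$ only, or both edges, giving $4$ possibilities. On the other hand, a $3$-multigraph has between any two distinct vertices either $0$, $1$, $2$, or $3$ indistinguishable edges, also $4$ possibilities. Hence a digraph of size $n$ and a $3$-multigraph of size $n$ both have $4^{\binom{n}{2}}$ elements. Moreover, the decomposition of a digraph into its (weakly) connected components matches the decomposition of a $3$-multigraph into its connected components under this identification at the level of counting sequences, so the two classes are combinatorially isomorphic in the sense used in this paper.

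Once this identification is established, weak connectedness of a digraph corresponds to connectedness of the associated $3$-multigraph, so I can invoke Proposition~\ref{proposition:multigraph} with $d=3$. The formula~\eqref{formula: SET-asymptotics for multigraphs} then becomes
\begin{equation*}
\PP\big(g\mbox{ is weakly connected}\big) \approx 1 - \sum\limits_{k\ge1} \it_k(3) \cdot \binom{n}{k} \cdot \frac{4^{k(k+1)/2}}{4^{kn}},
\end{equation*}
which is exactly the desired asymptotic expansion.

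No real obstacle is expected; the only subtle point is making the identification between digraphs and $3$-multigraphs precise. Since the paper works at the level of counting sequences and explicitly identifies combinatorial classes sharing the same counting sequence, the short identification argument together with the invocation of Proposition~\ref{proposition:multigraph} suffices, mirroring precisely the structure of the proof of Corollary~\ref{proposition:oriented graph}.
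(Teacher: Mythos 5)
Your proposal is correct and follows essentially the same route as the paper, which proves Corollary~\ref{proposition:digraph} simply by applying Proposition~\ref{proposition:multigraph} with $d=3$; your explicit count of the $4$ possible edge configurations between an ordered pair of distinct vertices is exactly the identification the paper makes (analogously to Corollary~\ref{proposition:oriented graph}).
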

\begin{proof}
Apply Proposition~\ref{proposition:multigraph} with $d=3$.
\end{proof}

Note however that Theorem~\ref{theorem:SET-asymptotics} is not adapted to
  estimating the probability that a directed graph is strongly connected.
  Indeed, a digraph can be decomposed as a directed acyclic graph ($\DAG$) of
  its strongly connected components, not a $\SET$.
  The ideas presented here can however be adapted in such a more complex
  context~\cite{DovgalNurligareev2022}.
\end{remark}

\subsection{Surfaces}

\subsubsection{Square-tiled surfaces}\label{section:origami}

Square-tiled surfaces play a key role in the study of abelian and quadratic
differentials on Riemann surfaces.
They first appeared in disguise in the proof by Douady and Hubbard that among
quadratic differentials, the ones that can be decomposed into a disjoint union
of vertical cylinders (Jenkins--Strebel forms) is dense
\cite{DouadyHubbard1975}.
The dense subset they used is the set of quadratic differentials whose
(relative) periods have rational coordinates, which turn out to be square-tiled
surfaces. 
Beyond density, Eskin and Okounkov used the uniform distribution of such
surfaces to estimate the volume of the strata of
abelian~\cite{EskinOkounkov2001} and quadratic~\cite{EskinOkounkov2006}
differentials (the computation of explicit values was implemented by
Goujard~\cite{Goujard2016}).
Square-tiled surfaces also provide examples of differentials with exceptional
behavior, such as the ``Eierlegende
Wollmilchsau''~\cite{HerrlichSchmithuesen2008} or the
``Ornithorynque''~\cite{ForniMatheusZorich2011}.

\begin{definition}\label{def: square-tiled}
  A \emph{quadratic square-tiled surface} is obtained from finitely many
  squares by identifying pairs of sides of the squares by isometries in such
  way that horizontal sides are glued to horizontal sides and vertical sides to
  vertical.
  An \emph{abelian square tiled surface} (also known as \emph{origami}) is
  obtained by gluing finitely many squares such that left edges are glued to
  right edges and top edges are glued to bottom edges.
  Equivalently, an origami of size $n$ is a pair of permutations
  $(\sigma,\tau)$ on the ground set $[n]$, where $\sigma(i)=j$ if the right
  side of the square labeled $i$ is glued to the left side of the square
  labeled $j$, and $\tau(i)=k$ if the top side of the square labeled $i$ is
  glued to the bottom side of the side labeled $k$~\cite{Zmiaikou2011}.
  We denote by $\O$ and $\CO$ the combinatorial classes of origamis and
  connected origamis, respectively.
\end{definition}

In particular, there are $((2n-1)!!)^2$ labeled quadratic square-tiled surface
of size $n$, and $(n!)^2$ origamis of size $n$.

Note that, in contrast with the historical references that focus on a fixed
stratum, we consider a model with unconstrained genus: every origami involving
$n$ labeled squares appears with the same probability (generically, the genus
grows in $\Theta(n)$).

\begin{definition}\label{def: linear orders}
  A permutation $\sigma$ on $[n]$ is \emph{indecomposable} if there is no $k<n$
  such that $\sigma\big([k]\big)=[k]$.
  The class of indecomposable permutations is denoted by $\IP$.
  For an integer $d \geq 1$, a \emph{$d$-multiple linear order} of size $n$ is
  a $d$-tuple of linear orders of the set $[n]$.
  A $d$-multiple linear order $L = (<_1,\ldots,<_d)$ is \emph{reducible}, if
  there is a non-trivial partition $A\sqcup B$ of the ground set $[n]$ such
  that $a <_k b$ for every pair $(a,b) \in A \times B$ and any order $<_k$.
  Otherwise, we call $L$ \emph{irreducible}.
  We denote by $\L(d)$ and $\IL(d)$ the combinatorial classes of $d$-multiple
  linear orders and irreducible $d$-multiple linear order, respectively.
\end{definition}

\begin{proposition}\label{proposition:origami}
  The asymptotic probability that an origami $o$ made up of $n$~unit squares is
  connected satisfies
  \begin{eqnarray}
    \PP\big(o\mbox{ is connected}\big)
    & \approx & 1 - \sum\limits_{k\ge1}\dfrac{\il_k(2)}{k!\cdot(n)_k}\\
    & \approx & 1 - \sum\limits_{k\ge1}\dfrac{\ip_k}{(n)_k},
  \end{eqnarray}
  where $\il_k(2)$ is the number of irreducible pairs of linear orders of size
  $k$,
  and $\ip_k$ is the number of indecomposable permutations of size $k$.
\end{proposition}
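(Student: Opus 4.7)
The plan is to apply Theorem~\ref{theorem:SET-asymptotics} to the class $\O$, noting that its counting sequence $\o_n=(n!)^2$ coincides with that of $\L(2)$, so the two classes are identified under the paper's convention. The $\SET$-decomposition $\O=\SET(\CO)$ is the standard decomposition of an origami into the orbits of the group $\langle\sigma,\tau\rangle$. For the $\SEQ$-decomposition I would establish that $\L(2)=\SEQ(\IL(2))$ by the same mechanism as for tournaments in Proposition~\ref{proposition:multigraph}: given a pair $(<_1,<_2)$ on $[n]$, call $A\subseteq[n]$ an \emph{initial segment} if $a<_i b$ for every $a\in A$, $b\notin A$ and $i\in\{1,2\}$. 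Two initial segments cannot be incomparable (otherwise picking $a\in A\setminus B$ and $b\in B\setminus A$ would yield both $a<_i b$ and $b<_i a$), so initial segments form a chain under inclusion; the successive differences partition $[n]$ into blocks that, by maximality of the filtration, carry irreducible $2$-multiple linear orders and inherit a canonical linear order (agreeing for $<_1$ and $<_2$), producing the required sequence decomposition.

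Next I would verify the gargantuan hypothesis for $a_n=\o_n/n!=n!$ via Lemma~\ref{lemma: sufficient conditions for gargantuan sequence}. Condition~(i)' is immediate since $n\cdot a_{n-1}=a_n$, and for~(ii)' one has
\[
\frac{x_{k+1}}{x_k}=\frac{(k+1)!\,(n-k-1)!}{k!\,(n-k)!}=\frac{k+1}{n-k}\leq1 \quad\mbox{for } k<n/2.
\]
Theorem~\ref{theorem:SET-asymptotics} then applies with derivative sequence $\d_k=\il_k(2)$; simplifying
\[
\binom{n}{k}\cdot\frac{\o_{n-k}}{\o_n}=\binom{n}{k}\cdot\frac{((n-k)!)^2}{(n!)^2}=\frac{(n-k)!}{k!\,n!}=\frac{1}{k!\,(n)_k}
\]
produces the first claimed expansion.

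For the second expansion I would prove the identity $\il_k(2)=k!\cdot\ip_k$. Given an irreducible pair $(<_1,<_2)$ on $[k]$, let $\sigma_1,\sigma_2\in S_k$ be the permutations listing $[k]$ in the orders $<_1$ and $<_2$ respectively, and set $\pi=\sigma_2^{-1}\sigma_1$. Then $\sigma_1([j])=\sigma_2([j])$ if and only if $\pi([j])=[j]$, so the pair is irreducible exactly when $\pi$ is indecomposable; the factor $k!$ arises from the free choice of $\sigma_1\in S_k$ once $\pi$ is fixed. Substituting this identity into the first expansion yields the second one.

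The main obstacle will be making the $\SEQ$-decomposition $\L(2)=\SEQ(\IL(2))$ rigorous, i.e. checking that initial segments really form a chain and that the resulting blocks inherit compatible irreducible structures; once that is in place, the remaining steps are routine bookkeeping around Theorem~\ref{theorem:SET-asymptotics} and the elementary bijection used to pass from $\il_k(2)$ to $\ip_k$.
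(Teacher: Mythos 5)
Your proposal is correct and follows essentially the same route as the paper: identify $\O$ with $\L(2)$ via the common counting sequence $(n!)^2$, use the double decomposition $\SET(\CO)$ / $\SEQ(\IL(2))$, verify that $(n!)$ is gargantuan, apply Theorem~\ref{theorem:SET-asymptotics}, and convert $\il_k(2)$ to $k!\cdot\ip_k$. The only differences are cosmetic: you invoke Lemma~\ref{lemma: sufficient conditions for gargantuan sequence} where the paper checks Definition~\ref{def: gargantuan sequence} directly, and you spell out the chain-of-initial-segments argument for $\L(2)=\SEQ(\IL(2))$ and the bijection $(<_1,<_2)\mapsto(\sigma_1,\sigma_2^{-1}\sigma_1)$ behind $\il_k(2)=k!\,\ip_k$, which the paper leaves to the analogy with tournaments and to Remark~\ref{remark:lift}.
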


\begin{proof}
  Since an origami can be decomposed into the disjoint union of its connected
  components, we have $\O = \SET(\CO)$.
  Since each origami is determined by a pair of permutations, its counting
  sequence is $\o_n=(n!)^2$.
  Hence, $\O = \L(2)$, since the number of pairs of linear orders of size $n$
  is also $(n!)^2$.
  This change of perspective, from permutations to linear orders, allows to
  decompose with respect to the SEQ construction: we have $\L(2) =
  \SEQ(\IL(2))$.
  
  Let us show that the sequence $a_n = (n!)^2/n! = n!$ is gargantuan.
  Indeed, the first condition of Definition~\ref{def: gargantuan sequence} trivially holds.
  To check the second condition, note that $a_ka_{n-k} \geq a_{k+1}a_{n-k-1}$ as long as $k \leq (n-1)/2$.
  Note also, that $na_{n-r-1} = O(a_{n-r})$.
  Hence, replacing $a_ka_{n-k}$ by $a_{r+1}a_{n-r-1}$ for all $k$ such that $r<k<n-r$, we have
  \[
    \sum\limits_{k=r}^{n-r}a_ka_{n-k}
     \leq
    2a_ra_{n-r} + (n-2r+1)a_{r+1}a_{n-r-1}
     =
    O(a_{n-r}).
  \]
  Thus, Theorem~\ref{theorem:SET-asymptotics} is applicable and gives the following asymptotics:
 \[
  \PP\big(o\mbox{ is }\SET\mbox{-irreducible}\big)
   \approx
  1 - \sum\limits_{k\ge1} \il_k(2) \cdot \binom{n}{k} \cdot \frac{\o_{n-k}(d)}{\o_n(d)}
  =
  1 - \sum\limits_{k\ge1} \dfrac{\il_k(2)}{k! \cdot (n)_k}.
 \]
\newline
The second asymptotics follows from
(see Remark~\ref{remark:lift})
\begin{equation}\label{eq:ip=il(2)/p}
  \ip_n = \dfrac{\il_n(2)}{n!}.
\end{equation}
\end{proof}

\begin{remark}\label{remark:lift}
  The nature of the elements of the ground set $[n]$ plays a role in the
  definition of indecomposable permutations, as those labels need to be
  compared with each other.
  The ground set both serves as a label set and as a way to compare elements:
  the notion of indecomposable permutation is not stable by relabeling.
  In particular, it is not possible to construct the product or the sequence of
  indecomposable permutations, since this operation involves relabelling (how
  to define the sequence of the three irreducible permutations $321, 21, 3142$?).
  The class $\IP$ can therefore not be considered as a combinatorial class as
  its definition is not functorial (we can not transport its structure to any
  label set).
  We can see the class $\IL(2)$ as a \emph{lift} of the class $\IP$ that
  consists in decoupling the role of the ground set $[n]$ as a label set on the
  one hand, and as a linear order on the other.
  The first order $<_1$ can be interpreted as the linear order of the ground
  set $1<2<\dots<n$, and the second as the linear order of the image by the
  permutation $\sigma(1)<\sigma(2)<\dots<\sigma(n)$.
  The $n!$ available relabelings explain the relation~\eqref{eq:ip=il(2)/p}.
\end{remark}

\begin{remark}
  Proposition~\ref{proposition:origami} can be seen as a topological version of
  results of Comtet, Dixon and Cori.
  Indeed, the connectedness of an origami $o$ of size $n$ is equivalent to the
  fact that the two permutations in the pair $(\sigma,\tau)$ determining $o$ generate a
  transitive subgroup of $S_n$.
  In 2005, Dixon~\cite{Dixon2005} showed that the asymptotic probability $t_n$
  that two permutations of size $n$ form a transitive subgroup satisfies \[ t_n
  \sim 1 - \dfrac{1}{(n)_1} - \dfrac{1}{(n)_2} - \dfrac{3}{(n)_3} -
  \dfrac{13}{(n)_4} - \dfrac{71}{(n)_5} - \dfrac{461}{(n)_6} - \ldots, \] where
  the numerators are coefficients of the multiplicative inverse of the formal
  power series comprised by factorials.
  The latter, as it was showed by Comtet~\cite{Comtet1972} in 1972, count
  indecomposable permutations.
  Alternatively, this fact was established by Cori~\cite{Cori2009} in~2009.
\end{remark}

\subsubsection{$p$-angulations and combinatorial maps}\label{section:combinatorial-map}

There are several models of random discrete oriented surfaces.
The simplest model consists of oriented triangles glued together along their
edges.
It was introduced in 2004 by Brooks and Makover~\cite{BrooksMakover2004} in
order to study the ``typical'' Riemann surfaces with high genus, and
independently by
Pippenger and Schleich~\cite{PippengerSchleich2005} who were motivated by the
needs of quantum gravity. Pippenger and Schleich showed that a random gluing of
$2n$ triangles form a connected surface with probability
\begin{equation}\label{formula: Pippenger and Schleich asymptotics}
  1 - \frac{5}{36n} + O\left(\frac{1}{n^2}\right)
\end{equation}
and studied their topological characteristics.
Gamburd~\cite{Gamburd2006} generalized this model by replacing triangles by
$p$-gons for a fixed integer $p\ge3$.
Then, Chmutov and Pittel~\cite{ChmutovPittel2016} extended the model to
polygons whose set of allowed perimeters is a given nonempty subset $J$ of
$\NN_{\geq 3}$, and showed that the probability to get a connected surface
always satisfies
$$1 - O\left(\frac{1}{n}\right).$$
Finally, in 2019, Budzinski, Curien and Petri~\cite{BudzinskiCurienPetri2019}
considered a model without any constraint on the perimeters of the polygons
(1-gons and 2-gons are permitted), and found that the probability to obtain a
connected surface is
\begin{equation}\label{formula: Budzinski, Curien and Petri asymptotics}
  1 - \frac{1}{n} + O\left(\frac{1}{n^2}\right).
\end{equation}
The main goal of the above authors was to study topological characteristics of
the proposed models, such as their Euler characteristic, genus, diameter, etc.
In all the mentioned models, glued polygons are assumed to be oriented and the
identified sides are oriented opposite-wise.
Let us focus on this last model studied by Budzinski, Curien and Petri.

\begin{definition}\label{def:combinatorial-map}
  A \emph{perfect matching} is a permutation whose orbits have length $2$.
  We denote by $\IM$ the class of indecomposable perfect matchings.
  A \emph{combinatorial map} of size $n$ is obtained from a collection of
  polygons whose set of sides is labeled by integers from $1$ to $n$, by
  identifying in pairs the sides of these polygons.
  Equivalently, a combinatorial map of size $n$ is a pair of permutations
  $(\sigma, \alpha)$ on $[n]$ such that $\alpha$ is a perfect matching.
  The orbits of $\sigma$ correspond to the enumeration of the sides of the
  polygons in the cyclic order. The orbits of $\alpha$ correspond to the
  identification of the edges.
  We denote by $\CM$ and $\CCM$ the classes of combinatorial maps and connected
  combinatorial maps, respectively.
\end{definition}

There are $n!(n-1)!!$ combinatorial maps of size $n$ when $n$ is even.
There is no combinatorial map of odd size because the sides of the polygons they are made of are identified in pairs.
The elements of the ground set $[n]$ are sometimes called ``darts'' or
``half-edges''.

\begin{definition}\label{def:linear-matching}
  A \emph{linear matching} is a pair of linear orders $(<_1,<_2)\in\L(2)$ such
  that the transform \[ (<_1,<_2) \mapsto (<_2,<_1) \] coincides with some
  relabeling $(1,\ldots,n)\mapsto(1,\ldots,n)$ that is an involution without any fixed point.
  We denote by $\LM(2)$ and $\ILM(2)$ the classes of linear matchings and
  irreducible linear matchings, respectively.
\end{definition}

\begin{proposition}\label{proposition:SET-asymptotics for combinatorial map surfaces}
 The asymptotic probability that a random combinatorial map $m\in\CM$ of size $2n$ is connected satisfies
   \begin{eqnarray}\label{formula: SET-asymptotics for combinatorial map surfaces}
    \PP\big(m\mbox{ is connected}\big)
    & \approx & 1 - \sum\limits_{k\ge1} \dfrac{\ilm_{2k}}{(2k)!}\cdot\dfrac{\big(2(n-k)-1\big)!!}{(2n-1)!!}\\
    & \approx & 1 - \sum\limits_{k\ge1} \im_{2k}\cdot\dfrac{\big(2(n-k)-1\big)!!}{(2n-1)!!},
  \end{eqnarray}
 where $\ilm_{k}$ is the number of irreducible linear matchings,
 and $\im_{k}$ is the number of indecomposable perfect matchings of size $k$.
\end{proposition}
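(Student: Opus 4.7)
The plan is to apply Proposition~\ref{theorem:SET-asymptotics, d-gargantuan} with $p=2$ via the double decomposition $\CM = \SET(\CCM) = \SEQ(\ILM(2))$, where the second identity is understood at the level of counting sequences.

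First, I would establish the two decompositions. The identity $\CM = \SET(\CCM)$ is immediate, since a combinatorial map splits canonically into its connected components, which are themselves combinatorial maps. For the $\SEQ$ side, I would identify $\CM$ with $\LM(2)$ by noting that both classes are counted by $(2n)!\cdot(2n-1)!!$ at size $2n$: for combinatorial maps, $(2n)!$ choices of the cycle-listing permutation $\sigma$ times $(2n-1)!!$ fixed-point-free involutions $\alpha$; for linear matchings, $(2n)!$ choices of $<_1$, and once $<_1$ is fixed the second order $<_2$ is encoded by a permutation of $[2n]$ which must be a fixed-point-free involution, again contributing $(2n-1)!!$. I would then prove $\LM(2) = \SEQ(\ILM(2))$ by mimicking the cut argument for $\L(2) = \SEQ(\IL(2))$: cut at the smallest $j$ such that the first $j$ elements of $<_1$ and of $<_2$ span the same subset $S\subseteq[2n]$; since the defining involution $\rho$ preserves $S$, the first factor is itself a linear matching, and iteration yields the sequence decomposition.

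Next, I would verify the gargantuan hypothesis by applying Lemma~\ref{lemma: sufficient conditions for gargantuan sequence} to $a_n = (2n-1)!!$. Condition (i)' is immediate from $na_{n-1}/a_n = n/(2n-1) = O(1)$, while condition (ii)' follows from
\[
  \frac{a_{k+1}a_{n-k-1}}{a_k a_{n-k}} = \frac{2k+1}{2(n-k)-1} \le 1
\]
whenever $k\le (n-1)/2$, so $x_k = a_k a_{n-k}$ decreases on $k < n/2$.

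Plugging these ingredients into Proposition~\ref{theorem:SET-asymptotics, d-gargantuan} yields an asymptotic expansion whose $k$-th term equals $\ilm_{2k}\binom{2n}{2k}\tfrac{(2(n-k))!\,(2(n-k)-1)!!}{(2n)!\,(2n-1)!!}$; a routine cancellation reduces this to $\tfrac{\ilm_{2k}}{(2k)!}\cdot\tfrac{(2(n-k)-1)!!}{(2n-1)!!}$, yielding the first formula. The second formula then follows from the identity $\im_{2k} = \ilm_{2k}/(2k)!$, which is justified exactly as the relation~\eqref{eq:ip=il(2)/p} in Remark~\ref{remark:lift}: each indecomposable perfect matching on $[2k]$ lifts to $(2k)!$ irreducible linear matchings via the available relabelings of the ground set. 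The most delicate step is the $\SEQ$ decomposition at the linear-matching level, where one must verify that the cuts of the underlying pair of linear orders are compatible with the matching involution $\rho$; the gargantuan check and the final algebraic simplification are routine.
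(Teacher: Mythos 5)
Your proposal is correct and follows essentially the same route as the paper: identify $\CM$ with $\LM(2)$ by matching the $2$-periodic counting sequence $(2n)!\,(2n-1)!!$, use $\CM=\SET(\CCM)$ and $\LM(2)=\SEQ(\ILM(2))$, verify that $(2n-1)!!$ is gargantuan via Lemma~\ref{lemma: sufficient conditions for gargantuan sequence} with the identical ratio computation, and conclude with Proposition~\ref{theorem:SET-asymptotics, d-gargantuan} and the relabeling identity $\ilm_{2k}=(2k)!\,\im_{2k}$. The extra care you take with the cut argument for $\LM(2)=\SEQ(\ILM(2))$ is a detail the paper delegates to the origami proof, not a different approach.
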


\begin{proof}
 We have $\CM = \LM(2)$, since the 2-periodic counting sequence of both classes
  is $n!\cdot(n-1)!!\cdot\II_{\{n\mbox{\footnotesize{ }is even}\}}$.
  As in the proof of Proposition~\ref{proposition:origami}, we have $\CM =
  \SET(\CCM)$ and $\LM(2) = \SEQ(\ILM(2))$.
\newline
 Let us show that the sequence
 \[
  a_n = \dfrac{(2n)! \cdot (2n-1)!!}{(2n)!} = (2n-1)!!
 \]
 is gargantuan.
 To this end, we apply Lemma~\ref{lemma: sufficient conditions for gargantuan sequence}.
 The first condition holds, since
 \[
  na_{n-1} = n(2n-3)!! =
  O\big((2n-1)!!\big) = O(a_n).
 \]
 Regarding the second condition, for $k \le (n-1)/2$ we have
 \[
  \dfrac{x_{k+1}}{x_{k}} =
  \dfrac{(2k+1)!!(2n-2k-3)!!}{(2k-1)!!(2n-2k-1)!!} =
  \dfrac{2k+1}{2n-2k-1} \le 1.
 \]
 Therefore, $(x_k)$ is decreasing for $k < n/2$, and the sequence $(a_n)$ is gargantuan.
\newline
\newline
 Hence, we can apply Proposition~\ref{theorem:SET-asymptotics, d-gargantuan} to obtain
 \[
  \PP\big(s\mbox{ is connected}\big)
  \approx
 1 - \sum\limits_{k\ge1}
  \dfrac{\ilm_{2k}}{(2k)!}\cdot\dfrac{\big(2(n-k)-1\big)!!}{(2n-1)!!}.
 \]
 The second asymptotics follows from $\ilm_n(2) = n!\cdot\im_n$, as explained
  in Remark~\ref{remark:lift} for~\eqref{eq:ip=il(2)/p}.
\end{proof}

\subsection{Higher dimensions}

\subsubsection{Graph Encoded Manifolds}\label{section:GEM}

GEMs, for Graph Encoded Manifolds, were introduced in the framework of
``crystallization theory'' as a way to encode compact
PL-manifolds~\cite{Pezzana1974,Pezzana1975,FerriGagliardiGrasselli1986}.
Those objects recently attracted attention of theoretical physicists, where
GEMs encode colored tensor models, seen as quantum gravity
theories~\cite{Gurau2011,BonzomGurauRielloRivasseau2011,GurauRyan2012,Witten2019}.

\begin{definition}\label{def:GEM}
  Let us fix a dimension $\DDIM\geq2$ and a size $n\geq 1$.
  A simplex of dimension $\DDIM$ has its vertices colored from $1$ to
  $\DDIM+1$.
  A \emph{GEM of dimension $\DDIM$} of size $n$ is a simplicial complex
  obtained from $n$ copies of the simplex glued according to the following
  rule: for each $1\leq k\leq\DDIM+1$, consider a perfect matching $\alpha_k$
  of $[n]$ and glue the hyperface opposite to the $k$th vertex of the $i$th
  simplex to the hyperface that opposite to the $k$th vertex of the
  $\alpha_k(i)$th simplex.
  We denote by $\GEM(\DDIM)$ the class of GEMs of dimension $\DDIM$.
  Equivalently, a $GEM$ of dimension $\DDIM$ and size $n$ is a graph on $[n]$
  vertices, where each vertex has a degree $(\DDIM+1)$ and each edge is colored
  into one of $\DDIM+1$ colors such that every vertex is incident to exactly
  one edge of each color.
\end{definition}

A GEM has an even number of simplices.
Every compact PL-manifold can be represented as a GEM. 
A GEM is orientable if, and only if, the associated graph is
bipartite~\cite{CavicchioliGrasselli1980}: there exists a partition of the
simplices into two sets $A$ and $B$ of size $n/2$ such that the simplices in
$A$ the are glued to simplices of $B$.
We denote $\OGEM(\DDIM)$ and $\COGEM(\DDIM)$ the class of orientable GEMs and
connected orientable GEMs of dimension $\DDIM$, respectively.
In the colored tensor model, orientable GEMS of dimension $\DDIM$ are called
\emph{closed (D+1)-colored graphs}.
There are $((2n-1)!!)^{\DDIM+1}$ GEMs and
$\binom{2n}{n}(n!)^{\DDIM+1}=(2n!)(n!)^{\DDIM-1}$ orientable GEMs of dimension $\DDIM$
and size $2n$.

In 2019, within the framework of the colored tensor model,
Carrance~\cite{Carrance2019} showed that a random orientable GEM of dimension
$D$ and size $n$ is connected with probability
\begin{equation}\label{formula:Carrance}
  1 - \frac{1}{n^{\DDIM-1}} + O\left(\frac{1}{n^{2(\DDIM-1)}}\right).
\end{equation}

The whole asymptotic expansion is provided by the following proposition.

\begin{proposition}\label{proposition:OGEM}
  Let $\DDIM\ge2$.
  The asymptotic probability that a random orientable GEM $g$ of dimension $D$
  of size $2n$ is connected satisfies
  \begin{equation}\label{formula:OGEM}
    \PP(s\mbox{ is connected}) \approx 1 -
    \sum\limits_{k\ge1}\dfrac{\il_{k}(\DDIM)}{k! \cdot
    \big((n)_k\big)^{\DDIM-1}}.
  \end{equation}
\end{proposition}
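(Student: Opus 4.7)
The plan is to follow the same template as the proof of Proposition~\ref{proposition:SET-asymptotics for combinatorial map surfaces}: exhibit a double $\SET/\SEQ$ decomposition of $\OGEM(\DDIM)$, verify the gargantuan hypothesis on the $2$-periodic counting sequence, and apply Proposition~\ref{theorem:SET-asymptotics, d-gargantuan} with $p=2$ (an orientable GEM always has an even number of simplices).

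The $\SET$-decomposition $\OGEM(\DDIM)=\SET(\COGEM(\DDIM))$ is immediate from the decomposition of an orientable GEM into its connected components. For the $\SEQ$-decomposition, I identify $\OGEM(\DDIM)$, at the level of counting sequences, with the auxiliary class $\mathcal{X}$ whose size-$2n$ elements consist of an \emph{ordered} perfect matching of $[2n]$ together with a $\DDIM$-tuple of linear orders on the resulting set of $n$ pairs. The number of ordered perfect matchings of $[2n]$ is $(2n)!/n!$ and a $\DDIM$-tuple of linear orders on $n$ objects contributes $(n!)^{\DDIM}$, so $|\mathcal{X}_{2n}|=(2n)!(n!)^{\DDIM-1}$, which matches the count of $\OGEM(\DDIM)$ recalled before the statement of the proposition. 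The $\SEQ$-decomposition of $\mathcal{X}$ is then inherited from $\L(\DDIM)=\SEQ(\IL(\DDIM))$ applied to the pair-level $\DDIM$-multiple linear order: an element of $\mathcal{X}_{2n}$ splits into a sequence of blocks, each block carrying the ordered matching restricted to the elements it covers together with an irreducible $\DDIM$-tuple of linear orders on its pairs. Writing $\mathcal{X}=\SEQ(\mathcal{Y})$, a direct count of $\mathcal{Y}_{2k}$ yields the derivative sequence $\d_{2k}=\frac{(2k)!}{k!}\il_k(\DDIM)$.

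The gargantuan hypothesis reduces to showing that $a_n=\a_{2n}/(2n)!=(n!)^{\DDIM-1}$ is gargantuan. For $\DDIM=2$ this is the factorial sequence, already proved gargantuan in the proof of Proposition~\ref{proposition:origami}, and for $\DDIM\ge 3$ Lemma~\ref{lemma: a_nb_n is gargantuan} gives the result by iterating the product by $(n!)$.

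Applying Proposition~\ref{theorem:SET-asymptotics, d-gargantuan} and substituting $\d_{2k}=\frac{(2k)!}{k!}\il_k(\DDIM)$ and $\a_{2n}=(2n)!(n!)^{\DDIM-1}$ reduces the general expression
\begin{equation*}
  1-\sum_{k\ge 1}\d_{2k}\binom{2n}{2k}\frac{\a_{2(n-k)}}{\a_{2n}}
\end{equation*}
to~\eqref{formula:OGEM} after cancellation of the factorial factors. I expect the main obstacle to be the precise construction of the auxiliary class $\mathcal{X}$ and the verification that its block decomposition really yields $|\mathcal{Y}_{2k}|=\frac{(2k)!}{k!}\il_k(\DDIM)$; once this identification is in place, the remainder of the argument is a routine imitation of the origami and combinatorial-map proofs.
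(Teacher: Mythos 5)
Your proposal is correct and follows essentially the same route as the paper: the $\SET$-decomposition into connected components, identification of the $2$-periodic counting sequence of $\OGEM(\DDIM)$ with that of $\L(\DDIM)=\SEQ(\IL(\DDIM))$ evaluated at $z^2$ (yielding $\d_{2k}=(2k)!\,\il_k(\DDIM)/k!$), the gargantuan property of $(n!)^{\DDIM-1}$ via Lemma~\ref{lemma: a_nb_n is gargantuan}, and an application of Proposition~\ref{theorem:SET-asymptotics, d-gargantuan} with $p=2$. The only cosmetic difference is that you realize the $\SEQ$-decomposition through an explicit auxiliary class (ordered matchings carrying $\DDIM$-tuples of linear orders on the pairs), whereas the paper performs the formal substitution $z\mapsto z^2$ directly on the generating functions.
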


\begin{proof}
 The counting sequence of the class $\OGEM(\DDIM)$ is 2-periodic with
 \[
  \ogem_{n}(\DDIM+1) =
  \binom{2n}{n} (n!)^{\DDIM+1} =
  (2n)! (n!)^{\DDIM-1}.
 \]
 Hence, the exponential generating function of this class coincides with the one of the class $\L(\DDIM)$ of $\DDIM$-multiple linear orders taken at $z^2$:
 \[
  \sum\limits_{n=0}^{\infty} (n!)^{\DDIM-1} z^{2n}.
 \]
 We have the following decompositions:
 $\OGEM(\DDIM) = \SET(\OGEM(\DDIM))$ and
 $\L(\DDIM) = \SEQ(\IL(\DDIM))$.
 We have seen in the proof of Proposition~\ref{proposition:origami} that the sequence $(n!)$ is gargantuan.
 Hence, according to Lemma~\ref{lemma: a_nb_n is gargantuan}, the sequence
 \[
  \dfrac{\ogem_{2n}(\DDIM)}{(2n)!} = (n!)^{\DDIM-1}
 \]
 is gargantuan too.
\newline
\newline
 This allows us to apply Proposition~\ref{theorem:SET-asymptotics, d-gargantuan}.
 Note that we need to formally rewrite the exponential generating function of the class $\IL(\DDIM)$ taken at $z^2$ as if it corresponds to a combinatorial class with 2-periodic counting sequence:
 \[
  \sum\limits_{n=1}^{\infty} \dfrac{(2n)! \cdot \il_k(\DDIM)}{n!} \cdot \dfrac{z^{2n}}{(2n)!}.
 \]
 Doing so, we get the asymptotics in the form
\begin{eqnarray*}
  \PP\big(g\mbox{ is }\SET\mbox{-irreducible}\big) & \approx &
  1 - \sum\limits_{k\ge1} \dfrac{(2k)! \cdot \il_k(\DDIM)}{k!} \cdot \binom{2n}{2k} \cdot \frac{\ogem_{n-k}(\DDIM)}{\ogem_n(\DDIM)} \\
  & \approx &
  1 - \sum\limits_{k\ge1} \dfrac{\il_k(\DDIM)}{k!} \cdot \frac{1}{\big((n)_k\big)^{\DDIM-1}}. \\
\end{eqnarray*}
\end{proof}

\begin{remark}
 We can extend the notion of indecomposable permutation in higher dimension.
 Let us define a $d$-\emph{multipermutation} of size $n$ as a $d$-tuple of permutations
 \[
  (\sigma_1,\ldots,\sigma_d) \in S_n^d.
 \]
 A $d$-multipermutation is said \emph{indecomposable}, if there is no $k<n$ such that $\sigma_i([k])=[k]$ for all $i\in[d]$.
 The same way as for relation~\eqref{eq:ip=il(2)/p}, the counting sequence of indecomposable $d$-multipermutations satisfies
 \begin{equation} \label{eq:imp(d)=il(d+1)/p}
  \imp_n(d) = \dfrac{\il(d+1)}{n!}
 \end{equation}
 This fact allows us to reformulate Proposition~\ref{proposition:OGEM} in the following way.

\begin{corollary}\label{proposition:OGEM2}
  Let $\DDIM\ge2$.
  The asymptotic probability that a random orientable GEM $g$ of dimension $D$
  of size $2n$ is connected satisfies
  \begin{equation}\label{formula:OGEM}
    \PP(s\mbox{ is connected}) \approx 1 -
    \sum\limits_{k\ge1}\dfrac{\imp_{k}(\DDIM-1)}{\big((n)_k\big)^{\DDIM-1}}.
  \end{equation}
  where $\imp_{k}(\DDIM-1)$ denotes the number of indecomposable
  $(\DDIM-1)$-multipermutations of size $k$.
\end{corollary}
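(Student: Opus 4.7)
The plan is to derive Corollary~\ref{proposition:OGEM2} as a purely algebraic reformulation of Proposition~\ref{proposition:OGEM}, using the identity \eqref{eq:imp(d)=il(d+1)/p} linking indecomposable multipermutations with irreducible multiple linear orders. First I would justify that identity in the same spirit as Remark~\ref{remark:lift}: a $d$-multipermutation $(\sigma_1,\dots,\sigma_d)$ on $[n]$ can be lifted to a $(d+1)$-multiple linear order by declaring $<_1$ to be the natural order $1<2<\dots<n$ of the ground set, and $<_{i+1}$ to be the order induced by the image $\sigma_i(1)<\sigma_i(2)<\dots<\sigma_i(n)$. Indecomposability of the multipermutation (no initial segment $[k]$ stable under all $\sigma_i$) translates verbatim into irreducibility of the resulting $(d+1)$-multiple linear order, and the $n!$ choices of relabeling of the underlying set account for the factor $n!$, yielding $\il_n(d+1) = n!\cdot\imp_n(d)$.

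Next I would simply plug $\il_k(\DDIM) = k!\cdot\imp_k(\DDIM-1)$ into the asymptotic expansion supplied by Proposition~\ref{proposition:OGEM}:
\begin{equation*}
\PP(g\text{ is connected}) \approx 1 - \sum_{k\ge 1}\frac{\il_k(\DDIM)}{k!\cdot\big((n)_k\big)^{\DDIM-1}}
= 1 - \sum_{k\ge 1}\frac{k!\cdot\imp_k(\DDIM-1)}{k!\cdot\big((n)_k\big)^{\DDIM-1}}.
\end{equation*}
The $k!$ factors cancel cleanly, leaving exactly the expression in \eqref{formula:OGEM}. Since this is a term-by-term substitution into an asymptotic expansion whose validity is already guaranteed by Proposition~\ref{proposition:OGEM}, the resulting series is again a valid asymptotic expansion in the sense of Notation~\ref{notation: approx}: each term is merely rewritten, and the decreasing order of magnitudes is preserved.

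The only nontrivial step is the lifting identity \eqref{eq:imp(d)=il(d+1)/p}; everything else is bookkeeping. I would therefore organize the proof as a short one-paragraph argument that first invokes \eqref{eq:imp(d)=il(d+1)/p} (referring the reader to Remark~\ref{remark:lift} and its natural extension to $d$ permutations) and then performs the substitution. No new analytic input, gargantuan verification, or appeal to Bender's theorem is required, because all the hard work has been done in Proposition~\ref{proposition:OGEM}; the corollary is really a change of variables from the linear-order viewpoint (natural from the $\SEQ$-decomposition side) to the permutation viewpoint (more familiar combinatorially).
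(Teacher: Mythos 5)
Your proposal is correct and matches the paper's approach exactly: the paper presents this corollary inside a remark whose entire justification is the lifting identity $\imp_n(d)=\il_n(d+1)/n!$ (established in the same spirit as Remark~\ref{remark:lift}) followed by the substitution $\il_k(\DDIM)=k!\cdot\imp_k(\DDIM-1)$ into Proposition~\ref{proposition:OGEM}, with the $k!$ factors cancelling. Your observation that no new analytic input or gargantuan verification is needed is also exactly how the paper treats it.
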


\end{remark}

\subsubsection{Constellations}

\begin{definition}\label{def:constellation}
  Given a positive integer $\DIM$, a $\DIM$-\emph{constellation} of size $n$ is
  a $\DIM$-mulipermutation $(\sigma_1,\ldots,\sigma_{\DIM})\in S_n^{\DIM}$ such that:
  \begin{itemize}
    \item the group $\langle\sigma_1,\ldots,\sigma_{\DIM}\rangle$ acts
      transitively on the set $[n]$,
    \item the product of $\sigma_k$ is the identity permutation, i.e.
      $\sigma_1\ldots\sigma_{\DIM}=\mathrm{id}$.
  \end{itemize}
  We denote the class of $\DIM$-constellations by $\CN(\DIM)$.
\end{definition}

While constellation have been introduced to study unramiﬁed covering of the
punctured sphere~\cite{LandoZvonkin2004}, they can be interpreted as a higher
dimensional generalization of origamis studied in
Section~\ref{section:origami}.
Indeed, the second condition is equivalent to
$\sigma_{\DIM}=(\sigma_1\ldots\sigma_{\DIM-1})^{-1}$ so that the first $\DIM-1$
permutations can be chosen independently, and the last one is determined by
those.
The permutations $\sigma_1,\ldots,\sigma_{\DIM-1}$ of $[n]$ describe how $n$
$(\DIM-1)$-hypercubes are glued in each direction to form a generalized
origami.
In this interpretation, the first condition is equivalent to the connectedness
of the generalized origami.

\begin{proposition}\label{proposition:constellation}
  Let $\DIM\ge3$.
  The number $c_n(\DIM)$ of $d$-constellations of size $n$ satisfies:
  \begin{eqnarray}
  c_n(d)
    & \approx & (n!)^{\DIM-1}\left(1-\sum\limits_{k\ge1}\dfrac{\imp_k(\DIM-2)}{\big((n)_k\big)^{\DIM-2}}\right)\label{formula:constellation:1}\\
    & \approx & (n!)^{\DIM-1}\left(1-\sum\limits_{k\ge1}\dfrac{\il_k(\DIM-1)}{k!\cdot\big((n)_k\big)^{\DIM-2}}\right)\label{formula:constellation:2}.
  \end{eqnarray}
\end{proposition}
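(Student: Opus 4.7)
The plan is to apply Theorem~\ref{theorem:SET-asymptotics} to the labeled combinatorial class $\A$ of $(\DIM-1)$-multipermutations, whose counting sequence is $\a_n=(n!)^{\DIM-1}$. The discussion following Definition~\ref{def:constellation} shows that imposing $\sigma_\DIM=(\sigma_1\cdots\sigma_{\DIM-1})^{-1}$ identifies $\DIM$-constellations with those $(\DIM-1)$-multipermutations whose generated subgroup acts transitively on the ground set. Restricting a $(\DIM-1)$-multipermutation to each orbit of its generated subgroup canonically decomposes it into a set of transitive components, each of which is a $\DIM$-constellation on its orbit, so $\A=\SET(\CN(\DIM))$ and $c_n(\DIM)$ is the counting sequence of the connected subclass. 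For the $\SEQ$ side, I would invoke the combinatorial isomorphism convention of the paper: the class $\L(\DIM-1)$ of $(\DIM-1)$-multiple linear orders has the same counting sequence $(n!)^{\DIM-1}$ as $\A$, and $\L(\DIM-1)=\SEQ(\IL(\DIM-1))$ by construction, so the derivative sequence of $\A$ is $\d_k=\il_k(\DIM-1)$.

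Next I would verify the gargantuan hypothesis. The sequence $\a_n/n!=(n!)^{\DIM-2}$ is, for $\DIM\ge 3$, a product of at least one copy of the sequence $(n!)$, shown to be gargantuan in the proof of Proposition~\ref{proposition:origami}; repeated application of Lemma~\ref{lemma: a_nb_n is gargantuan} then yields gargantuanness of $(\a_n/n!)$. Theorem~\ref{theorem:SET-asymptotics} therefore gives
$$
  \frac{c_n(\DIM)}{(n!)^{\DIM-1}} \approx 1 - \sum_{k\ge 1}\il_k(\DIM-1)\binom{n}{k}\frac{\bigl((n-k)!\bigr)^{\DIM-1}}{(n!)^{\DIM-1}},
$$
and the elementary simplification $\binom{n}{k}\bigl((n-k)!\bigr)^{\DIM-1}/(n!)^{\DIM-1} = 1/\bigl(k!\cdot ((n)_k)^{\DIM-2}\bigr)$ yields expansion~\eqref{formula:constellation:2}. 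Substituting the identity $\il_k(\DIM-1)/k!=\imp_k(\DIM-2)$ from~\eqref{eq:imp(d)=il(d+1)/p} then converts it into~\eqref{formula:constellation:1}.

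The only step requiring genuine combinatorial care is the $\SET$ decomposition $\A=\SET(\CN(\DIM))$: one must check that the orbit-wise restriction of a $(\DIM-1)$-multipermutation is well-defined, transitive on each orbit, and that the original object is uniquely reconstructed from its transitive components. The remainder of the argument — checking the gargantuan hypothesis, applying Theorem~\ref{theorem:SET-asymptotics}, and bookkeeping with falling factorials — closely parallels the templates already used for Proposition~\ref{proposition:origami} and Proposition~\ref{proposition:OGEM}, so I do not expect any unexpected difficulty beyond that initial combinatorial identification.
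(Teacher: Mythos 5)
Your proposal is correct and follows essentially the same route as the paper: the $\SET$ decomposition $\S(\DIM-1)=\SET(\CN(\DIM))$ of $(\DIM-1)$-multipermutations into transitive components, the identification with $\L(\DIM-1)=\SEQ(\IL(\DIM-1))$ via equality of counting sequences, the gargantuan check for $(n!)^{\DIM-2}$ using Lemma~\ref{lemma: a_nb_n is gargantuan}, and the conversion between the two expansions via relation~\eqref{eq:imp(d)=il(d+1)/p}. The only difference is that you spell out the binomial simplification and the orbit-restriction argument in more detail than the paper does, which is harmless.
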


\begin{proof}
  The estimation of the number of constellations follows from the computation
  of the probability that a uniformly chosen $(\DIM-1)$-multipermutation
  determines $\DIM$-constellation (after adding the last permutation).
  This result is a generalisation of Proposition~\ref{proposition:origami}.
  If $\S(\DIM-1)$ denotes the class of $(\DIM-1)$-multipermutations, we have
  $\S(\DIM-1)=\SET\big(\CN(\DIM)\big)$.
  The corresponding counting sequence is~$(n!)^{\DIM-1}$, which coincides with
  the one of the class $\L(\DIM-1)=\SEQ(\IL(\DIM-1))$.
  As we have seen in the proof of Proposition~\ref{proposition:OGEM}, the
  sequence~$(n!)^{\DIM-2}$ is gargantuan for $\DIM>2$.
  Therefore, we can apply Theorem~\ref{theorem:SET-asymptotics} that, together
  with relation~\eqref{eq:imp(d)=il(d+1)/p} and multiplication of both sides
  by~$(n!)^{\DIM-1}$, gives the asymptotic
  expansions~(\ref{formula:constellation:1}) and
  (\ref{formula:constellation:2}). 
\end{proof}

\section{Conclusion}

We have seen how to provide the whole asymptotic expansion of the probability
that a combinatorial class admitting a double SET/SEQ decomposition is
connected.
The relationship between those two decompositions corresponds, symbolically, to
the relation
$$\frac{\partial\log(z)}{\partial z} = \frac{1}{z}.$$
\newline

It is tempting to iterate this idea to get the asymptotic probability that a
combinatorial structure is SEQ-irreducible from the equation
$$\dfrac{\partial}{\partial z}\left(\dfrac{1}{z}\right) = - \frac{1}{z^2}.$$
The derivative sequence should therefore be related to structures admitting a
decomposition into two SEQ-irreducible components.
This will be the subject of the forthcoming paper~\cite{MonteilNurligareevSEQ}.
\newline

In the applications, we relied on the existence of a double $\SET/\SEQ$
decomposition for some combinatorial classes: graphs \emph{vs} tournaments,
permutations \emph{vs} linear orders, and combinatorial maps \emph{vs} linear
matchings.
However, some models ($p$-angulated surfaces, quadratic square-tiled surface,
not necessarily orientable GEMs) have been described without an explicit SEQ
decomposition.
First, note that it is still possible to apply (the proof of)
Theorem~\ref{theorem:SET-asymptotics}, by replacing the combinatorial sequence
$\d_k$ with the $k$th coefficient of the formal power series $D(z):=1-1/A(z)$.
In particular, we have:

\begin{proposition}\label{proposition:}
 $$
  \PP(\mbox{a triangulated surface is connected}) \approx
  1 -
  \frac{5}{36n} -
  \frac{695}{2592n^2} - 
  \frac{216305}{279936n^3} - 
  \ldots
 $$
  \newline
 $$
  \PP(\mbox{a quadrangulated surface is connected}) \approx
  1 -
  \frac{3}{16n} -
  \frac{183}{512n^2} -
  \frac{8313}{8192n^3} -
  \ldots
 $$
  \newline
 $$
  \PP(\mbox{a quadratic square-tiled surface is connected}) \approx
  1 -
  \frac{1}{4n} -
  \frac{15}{32n^2} -
  \frac{167}{128n^3} -
  \frac{11845}{2048n^4} -
  \ldots
 $$
  \newline
 $$
  \PP(\mbox{a GEM of dimension 3 is connected}) \approx 
  1 -
 \frac{1}{8n^2} -
 \frac{3}{16n^3} -
 \frac{49}{128n^4} -
 \frac{145}{128n^5} -
  \ldots
 $$
\end{proposition}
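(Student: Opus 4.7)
The plan is to extend the proof of Theorem~\ref{theorem:SET-asymptotics} (and its $p$-periodic variant Proposition~\ref{theorem:SET-asymptotics, d-gargantuan}) by observing that the SEQ decomposition is not actually needed for the analytic argument. Indeed, the proof applies Bender's theorem to $U(z) = A(z)-1$ and $F(y) = \log(1+y)$, producing $W(z) = 1/A(z) = 1 - D(z)$; the SEQ decomposition enters only to identify the EGF coefficients $\d_k := k![z^k]D(z)$ with a counting sequence. Treating these $\d_k$ simply as rational numbers, the conclusion
\[
  \frac{\c_n}{\a_n} \approx 1 - \sum_{k\ge 1} \d_k \binom{n}{k} \frac{\a_{n-k}}{\a_n}
\]
still holds whenever $\A = \SET(\C)$ and the gargantuan hypothesis on $(\a_n/n!)$ (or $(\a_{pn}/(pn)!)$) is satisfied.

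For each of the four classes, I would first identify the counting sequence $\a_n$ of the ambient class from its standard dart/perfect-matching description, and verify the gargantuan property by direct application of Lemma~\ref{lemma: sufficient conditions for gargantuan sequence}, combined when necessary with Lemma~\ref{lemma: a_nb_n is gargantuan}. For quadratic square-tiled surfaces of size $n$, $\a_n = ((2n-1)!!)^2$; for GEMs of dimension 3 of size $2n$, $\a_n = ((2n-1)!!)^4$; for triangulated and quadrangulated surfaces the counting sequence is also a closed-form product of a double factorial and a factor enumerating the labeled face-structure. In each case condition (i)' reduces to the fast decay of a ratio of (double) factorials and condition (ii)' to the unimodality of $|a_ka_{n-k}|$, mirroring the verifications already performed in Propositions~\ref{proposition:SET-asymptotics for combinatorial map surfaces} and~\ref{proposition:OGEM}.

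The explicit numerical coefficients in the proposition are then produced by formal power series computation. Concretely, one expands $1/A(z)$ using the geometric series $\sum_{j\ge 0}(1-A(z))^j$ up to enough terms, extracts $\d_k = k![z^k]D(z)$, and simplifies each summand $\d_k\binom{n}{k}\a_{n-k}/\a_n$ into an explicit rational function of $n$ by cancelling the relevant (double) factorials. The order in $n$ of each contribution is dictated by the growth of $\a_n$: for the three surface families the ratio $\a_{n-1}/\a_n$ is $\Theta(n^{-2})$, which combines with $\binom{n}{1}$ to produce a leading term of order $n^{-1}$; for GEMs of dimension 3 the ratio is $\Theta(n^{-4})$, yielding a leading term of order $n^{-2}$.

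The principal difficulty is the opening conceptual step: one must argue carefully that the integrality and combinatorial meaning of the $\d_k$ play no role in Bender's theorem, so that the asymptotic formula extends verbatim. Once this is established, the remaining work is a routine but voluminous exercise in formal power series arithmetic, most conveniently performed with computer algebra.
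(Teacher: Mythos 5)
Your proposal is correct and follows essentially the same route as the paper: the text immediately preceding the proposition already observes that one may apply the \emph{proof} of Theorem~\ref{theorem:SET-asymptotics} with $\d_k$ replaced by the $k$th coefficient of the formal power series $D(z)=1-1/A(z)$, since Bender's theorem is purely analytic and never uses integrality or a combinatorial $\SEQ$ interpretation of these coefficients. The remaining steps (identifying $\a_n$ as $(6n-1)!!$, $(4n-1)!!$, $((2n-1)!!)^2$, $((2n-1)!!)^4$ with the appropriate periodicity, checking the gargantuan condition, and extracting the rational coefficients by series expansion) match the paper's computation.
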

\begin{proof}
  Apply the proof of Theorem~\ref{theorem:SET-asymptotics},
  where $\d_k/k!$ is the $k$th coefficient the of
  $$
   1 - \left(
    \sum\limits_{n\geq0}(6n-1)!!\dfrac{z^{2n}}{(2n)!}
   \right)^{-1},
  $$
  $$
   1 - \left(
    \sum\limits_{n\geq0}(4n-1)!!\dfrac{z^n}{n!}
   \right)^{-1}
  $$
  $$
   1 - \left(
    \sum\limits_{n\geq0}\big((2n-1)!!\big)^2 \dfrac{z^n}{n!}
   \right)^{-1},
  $$
  and
  $$
   1 - \left(
    \sum\limits_{n\geq0}\big((2n-1)!!\big)^4 \dfrac{z^{2n}}{(2n)!}
   \right)^{-1},
  $$
  respectively.
\end{proof}

However, a combinatorial interpretation for the coefficients is still missing.
In the second forthcoming paper~\cite{MonteilNurligareevANTISEQ}, we will construct an
``$\mbox{anti-}\SEQ$'' operator, which provides a combinatorial interpretation
of such classes.

\bibliographystyle{abbrv}
\bibliography{bibliography}

\end{document}